\newlength{\cqfd}
\def\R{\mathbb{R}}
\newtheorem{theorem}{Theorem}
\newtheorem{proposition}{Proposition}
\newtheorem{lemma}{Lemma}
\newtheorem{corollary}{Corollary}
\title{\bf Mathematical derivation of viscous shallow-water equations \\ with zero surface tension}
\author{ Didier Bresch$^{1}$, Pascal Noble$^2$}
\begin{document}

 \maketitle \vspace{-1.5cm}\textit{
\begin{center}
$^1$ Universit\'e de Savoie, \\
Laboratoire de Math\'ematiques, UMR CNRS 5127\\
Campus Scientifique, 73376 Le Bourget du Lac, France\\
{\small email : Didier.Bresch@univ-savoie.fr}\\
$^2$ Universit\'e de Lyon, Universit\'e Lyon 1,\\
UMR CNRS 5208\\ Institut Camille Jordan,
Batiment du Doyen Jean Braconnier,\\
 43, blvd du 11 novembre 1918,
F - 69622 Villeurbanne Cedex, France\\
{\small email : noble@math.univ-lyon1.fr}
\end{center}
}

\noindent
{\bf Abstract.} 
The purpose of this paper is to derive rigorously the so called viscous shallow water equations given for instance page 958-959 in [{\sc A. Oron, S.H. Davis, S.G. Bankoff}, \it Rev. Mod. Phys,  \rm 69 (1997), 931--980]. Such a system of equations is similar to compressible Navier-Stokes equations for a barotropic fluid with a non-constant viscosity. To do that, we consider a layer of incompressible and Newtonian fluid which is relatively thin, assuming {\it no surface tension} at the free surface. The motion of the fluid is described by $3d$ Navier-Stokes equations with constant viscosity and free surface. We prove that for a set of suitable initial data (asymptotically close to ``shallow water initial data''), the Cauchy problem for these equations is well-posed, and the solution converges to the solution of viscous shallow water equations. More precisely, we build the solution of the full problem as a perturbation of the strong solution to the viscous shallow water equations.
 The method of proof is based on a Lagrangian change of variable that fixes the fluid domain and we have to prove the well-posedness in thin domains: we have to pay a special attention to constants in classical Sobolev inequalities and regularity in Stokes problem.

\bigskip

\noindent
{\bf Keywords:} {\small  Navier-Stokes, shallow water, thin domain, free surface, asymptotic analysis,  Sobolev spaces.}

\noindent {\bf AMS subject classification}: 35Q30, 35R35, 76A20, 76B45, 76D08.

\section{Introduction}
 This paper deals with a free boundary problem of nonstationary Navier-Stokes equations in thin 
domain justifying  the derivation of the viscous shallow water equations without surface tension.
 For information, such a system may be used to describe long-scale evolution of thin liquid films : See Equations (4.12a)-(4.12d) written in \cite{OrDaBa}. Depending on the geometry, there are two kinds of free boundary problems for the motion of a viscous fluid. The first one describes the motion of an isolated mass of fluid bounded by a free boundary and the second one describes the motion of a fluid occupying a semi-infinite domain in $\R^n$ bounded above by a free surface and below by a fixed part of the boundary.  We will consider in the present paper the second problem and refer the interested reader
to \cite{OrDaBa}, \cite{Perthame} for applications for thin liquid films and rivers.
  First mathematical results for free boundary problems were local existence theorems: A first local existence theorem published in 1977 by {\sc V.A.~Solonnikov},  in \cite{Sol1}~; a local existence theorem for the second problem  by {\sc J.T. Beale}, in 1980, in \cite{Beale0}.
   The following years brought other local existence theorems for equations of motion of incompressible
   fluids: see for instance \cite{Al1}, \cite{Al2},  \cite{Ta},  \cite{Ter1}, \cite{Ter2}.
   The next step in investigating free boundary problems for incompressible Navier-Stokes equations was to obtain global existence theorems initiated by {\sc J.T. Beale} in his paper \cite{Beale} published in 1984. This paper was devoted to the motion of a fluid contained in a three dimensional infinite ocean. The first global existence result concerning the motion of a fixed mass of a fluid bounded by a free surface appeared, in the paper \cite{Sol2}, by {\sc V.A. Solonnikov} in 1986.
    The method used to prove the global existence results in the two papers mentioned above are completely different. {\sc J.T.~Beale} examined the surface waves after transformating it to the equilibrium domain through an adequate change of variable (dilatation in the vertical variable) 
 while {\sc V.A. Solonnikov} applied Lagrangian coordinates and this way transformed the considered drop problem to the initial domain. Other global existence theorem can be found. All global existence results for incompressible fluids are obtained for initial data sufficiently close to an equilibrium solution. 
      Note that  {\sc A. Tani}, in \cite{Ta} \cite{TaTa}, used the strategy developped by {\sc V.A. Solonnikov} for free surface problem to get a local existence result through the use of Lagrangian  coordinates to fix the domain and the method of successive approximations.  In contrast to {\sc J.T. Beale}, the regularity of the solution obtained by {\sc A. Tani} is sharp. Note that in {\sc J.T. Beale} existence of arbitrary $T>0$ but for sufficiently small initial data in dependence on $T$ is also proved. The result is obtained in the same class of functions than the local existence.
     To the author's knowledge, in all these studies Dirichlet boundary conditions at the bottom are considered instead of Navier-Slip boundary conditions. Only recent papers with some rigid-lid assumptions consider Navier-Stokes  equations in thin domains with Navier-Slip conditions,
 see for instance  \cite{ChRaRe}, \cite{IfRaSe}.
     In this paper, we will adopt  the Lagrangian transform strategy in order to prove existence and stability of the viscous shallow water solution. To do that, we have to prove the well-posedness in thin domains with a special attention to constants in classical Sobolev inequalities and regularity in Stokes problem. 
    We also prove the stability result in the Lagrangian form as in the paper written by {\sc D. Hoff}, see \cite{Hoff}, where
uniqueness of weak solutions of the Navier-Stokes equations of multidimensional, compressible flow  is
proved using the Lagrangian formulation.
    We remark also that, using the Lagrangian formulation, no surface tension term is needed compared to \cite{Bresch_Noble}. Remark that there exists, at least, two different shallow-water type systems mainly derived from two different bottom  boundary conditions choices for Navier-Stokes equations with free surface:  friction boundary conditions, no-slip boundary conditions. 
   The system based on friction boundary conditions assumption is studied in the present paper and was mentionned in \cite{OrDaBa}. The system based on no-slip assumption has been  formally derived in \cite{BoChNoVi} and recently justified in \cite{Bresch_Noble} assuming non zero surface tension.
   Reader interested by shallow-water equations and related topics introduction is refered to the recent Handbook \cite{Br}. 
   
The paper will be divided in four Sections. The first one presents the Navier-Stokes equations with free surface and the viscous shallow-water approximation. The second section 
is devoted to the linearized Navier-Stokes equations around the approximate solution built from a solution of the viscous
shallow-water equations. We give anisotropic estimates and Korn's inequality in thin domains and then prove the well posedness of the linearized Navier-Stokes equations. The main theorem and its
corollary are proved in Section 4 that means well posedness and convergence of the full Navier-Stokes equations in Lagrangian form. On the last section, we draw some conclusions on this paper and present some perspectives to this work.

\section{Navier-Stokes equations with free surface and viscous shallow-water approximation}
   In this section, we write $3d$ Navier-Stokes equations for a thin layer of an incompressible Newtonian fluid with a free surface. We then show how to obtain an {\it approximate} solution of these equations in the shallow water scaling: for that purpose, we will use smooth solution (see {\sc W. Wang} and {\sc C.J.~Xu}~\cite{WaXu}) of a viscous shallow water model (\ref{sw0}) derived for instance in {\sc A. Oron, S.H.~Davis} and {\sc S.G. Bankoff} \cite{OrDaBa} (see also {\sc J.--F. Gerbeau} and $\>\>$ {\sc B.~Perthame} \cite{Perthame}). We construct formally a second order approximate solution (with respect to the aspect ratio $\varepsilon$ which measures the relative shallowness of the fluid layer) which
will help to solve the Cauchy problem for Navier-Stokes equations with free surface and provides also convergence to viscous shallow-water equations. For this purpose, we formulate the equations in
Lagrangian coordinates.

\subsection{The equations and an existence theorem for the associated viscous shallow-water system}
  In this paper, we consider the motion of a thin layer of Newtonian fluid with a constant density, set to $1$. The differential operators $\nabla_x, {\rm div}_x$ will concern the horizontal coordinates $x=(x_1,x_2)$. In the following, the fluid velocity $u$ is denoted $(u_H,u_V)$ where $u_H\in\mathbb{R}^2$ (resp $u_V\in\mathbb{R}$) is the horizontal (resp. vertical) fluid velocity. Navier-Stokes equations are then written, in their non dimensional form as
{\setlength\arraycolsep{1pt}
\begin{eqnarray}\label{ns_ad}
\displaystyle
\partial_t u+u.\nabla u+\frac{1}{F_0^2}\nabla p&=&-\frac{{\rm e}_3}{F_0^2}+\frac{1}{\rm Re}{\rm div}\big(D(u)\big),\nonumber\\
\displaystyle
{\rm div} \, u&=&0,
\end{eqnarray}}
for all $t>0$ and  $(x,z)\in\Omega_t$ the fluid domain
$$
\displaystyle
\Omega_t=\left\{(x,z)\:/\:x\in\mathbb{X},\quad z\in(0,\,h(x,t))\right\}, \quad \mathbb{X}=\mathbb{T}^n,\mathbb{R}^n,\quad n=1,2.
$$
\noindent
The nondimensional numbers $F_0$, $\rm Re$ are respectively the Froude and Reynolds numbers
defined as 
$$
\displaystyle F_0^2=\frac{U^2}{g\,L},\quad {\rm Re}=\frac{L\,U}{\nu},
$$
\noindent
with $U$ a characteristic velocity of the fluid and $L$ a characteristic wavelength. Navier-Stokes equations are completed by  boundary conditions. The impermeability condition at the free surface reads
\begin{equation}\label{kin0}
\displaystyle
\partial_t h+u_H|_{z=h}.\nabla_x h=u_V|_{z=h},
\end{equation}
\noindent
which means that the fluid velocity is parallel to the free surface. We also assume a Navier slip condition at the bottom
\begin{equation}\label{slip0}
\displaystyle
u_V|_{z=0}=0,\qquad \partial_z u_H|_{z=0}=\gamma u_H|_{z=0},
\end{equation}
and at the free surface, we assume the continuity of the fluid stress
\begin{equation}\label{sliph}
\displaystyle
\big(\frac{1}{\rm Re}D(u)-\frac{p}{F_0^2}\,Id\big)|_{z=h}\,\vec{n}=-\frac{p_{atm}}{F_0^2}\,\vec{n}.
\end{equation}
\noindent
Here $D(u)=\nabla u+\nabla u^T$ is the total deformation tensor of the fluid and can be written  as
$$
\displaystyle
D(u)=\left(\begin{array}{cc} \displaystyle D_x(u_H) & \partial_z u_H+\nabla_x u_V\\
                                                                      \displaystyle (\partial_z u_H+\nabla_x u_V)^T & 2\partial_z u_V
                                        \end{array}\right),\:\:\: D_x(u_H)=\nabla_x u_H+(\nabla_x u_H)^T.
                                        $$                              
\noindent
The vector $\vec{n}$ is the normal to the free surface:
$$
\displaystyle
 \sqrt{1+|\nabla_x h|^2}\,\vec{n}=\left(\begin{array}{c}-\nabla_x h\\1\end{array}\right).
$$
\noindent
Since the fluid pressure is defined up to a constant, we suppose that $p_{atm}=0$. Note that equation (\ref{sliph}) can be equivalently written as
{\setlength\arraycolsep{1pt}
\begin{eqnarray}
\label{sliph_a}
\displaystyle
\frac{p|_{z=h}}{F_0^2}&=&\frac{\big(2\partial_z u_V-(\nabla_x h)^TD_x(u_H)(\nabla_x h)\big)|_{z=h}}{{\rm Re}\big(1-|\nabla_x h|^2\big)},\\
\displaystyle
\partial_z u_H&+&\nabla_x u_V|_{z=h}=\Big(D_x(u_H)-\frac{2\partial_z u_V-(\nabla_x h)^TD_x(u_H)\nabla_x h}{1-|\nabla_x h|^2}\Big)|_{z=h}\nabla_x h.\nonumber
\end{eqnarray}}
\noindent
 In this paper, we will assume that the fluid domain is relatively thin and set $h=\varepsilon\overline{h}$ , $F_0^2=\varepsilon F^2$ with $\varepsilon\ll 1$. As a result, the Froude number $F=\frac{U}{\sqrt{gH}}$ with $H=\varepsilon L$, the characteristic fluid height, is the physical Froude number as the ratio between fluid velocity and wavespeed of perturbations at the free surface. Moreover, following the method of derivation by Gerbeau and Perthame \cite{Perthame} to obtain viscous shallow water equations, we suppose that the friction at the bottom is small: $\gamma=\varepsilon\overline{\gamma}$.  Let us define an approximate solution of Navier-Stokes system (\ref{ns_ad},\ref{kin0},\ref{slip0},\ref{sliph}). For that purpose, we introduce $(h_0,u_0)$ a smooth solution of the shallow water system mentionned by {\sc A. Oron, S.H. Davis, S.G. Bankoff} \cite{OrDaBa}, see page 958-959 with $\overline C \to \infty$:
{\setlength\arraycolsep{1pt}
\begin{eqnarray}
\displaystyle
\partial_t h_0+{\rm div}_x(h_0\,u_0)&=&0, \nonumber\\
\displaystyle
\label{sw0}
h_0\big(\partial_t u_0+u_0.\nabla_x u_0)+\nabla_x\frac{h_0^2}{2F^2}&=&\frac{1}{\rm Re}{\rm div}_x\big(h_0D_x(u_0)\big) \\
\displaystyle
&&+\frac{2}{Re}\nabla_x\big(h_0{\rm div}_x(u_0)\big)-\frac{\overline{\gamma}u_0}{\rm Re}. \nonumber
\end{eqnarray}}
\noindent
   For the moment, we carry out a formal computation: we will make more precise assumptions on this solution in the sequel. We present a second order apporximate solutions, that means we search for an approximate solution of (\ref{ns_ad},\ref{kin0},\ref{slip0},\ref{sliph}) in the form $h_{a}=\varepsilon h_0$ and
$$
\displaystyle
u_{a,H}=u_0+u_1\,z+u_2\,\frac{z^2}{2},\quad u_{a,V}=w_0+w_1\,z+w_2\frac{z^2}{2} + w_3 \frac{z^3}{6},\quad p_{a}=p_0+p_1\,z + p_2 \frac{z^2}{2}.
$$
In the main Theorem we need higher order approximate solution that can be obtained in the same
way, see \cite{Bresch_Noble} for construction of high order approximation of Navier Stokes systems with a free surface.
\noindent
Inserting this ansatz in the divergence free condition, one has 
$$w_1=-{\rm div}_x u_0,\qquad w_2=-{\rm div}_x u_1, \qquad w_3 = -{\rm div}_x u_2.$$
Moreover, the boundary conditions at the bottom imposes
$$w_0=0, \qquad u_1=\varepsilon\overline{\gamma}u_0.$$
 Next, using the second equation of (\ref{ns_ad}) and boundary condition (\ref{sliph_a}), one obtains the approximation of $p$ as
$$
\displaystyle
p_{a}=\varepsilon h_0-z-\frac{2\varepsilon F^2}{\rm Re}{\rm div}_x u_0
               -\frac{2\varepsilon^3 \overline \gamma F^2}{\rm Re}{\rm div}_x u_0.$$
\noindent
Next, we insert $u_{a}$ and $p_{a}$ into the second equation of (\ref{sliph_a}): identifying order $O(\varepsilon)$ terms, one proves that
$$
\displaystyle
u_2+\nabla_x w_1=\big(D_x(u_0)+2{\rm div}_x(u_0)Id\big)\frac{\nabla_x h_0}{h_0}-\frac{\overline{\gamma} u_0}{h_0}.
$$

\noindent {\it Remark.} For the approximate solution, we can calculate an ansatz up to any order 
on derivatives of $(h_0,u_0)$ and polynomial dependence with respect to $z$. We refer to 
\cite{Bresch_Noble} for more details. We end up at order which gives a remaining term of order 
${\cal}(z^3)$ for interior equations and ${\cal}(\varepsilon^3)$ at free surface.

\medskip

\noindent {\it A Navier-Stokes type system is satisfied by the high order shallow water
approximation $(u_a,p_a)$}. The adequate ansatz satisfies
{\setlength\arraycolsep{1pt}
\begin{eqnarray}\label{ns_approx}
\displaystyle
\partial_t u_a+u_a.\nabla u_a+\frac{1}{\varepsilon F^2}\nabla p_a&=&-\frac{{\rm e}_3}{\varepsilon F^2}+\frac{1}{\rm Re}{\rm div}\big(D(u_a)\big)
+ \mathcal{O}(z^3),\nonumber\\
\displaystyle
{\rm div} \, u_a &=&\mathcal{O}(z^3).
\end{eqnarray}}
\noindent
Moreover, the boundary conditions at the bottom are {\it exactly} satisfied and the boundary conditions at the free surface read
$$
\displaystyle
\partial_t h_a+u_{a,H}|_{z=h_a}\nabla_H h_a-u_{a,V}|_{z=h_a}=\mathcal{O}(\varepsilon^3),
$$
\noindent
$$
\displaystyle
\Big(\frac{D(u_a)}{\rm Re}-\frac{p_a}{\varepsilon F^2}\Big)|_{z=h_a}\vec{n}_a=\mathcal{O}(\varepsilon^3).
$$
 For the sake of completeness, we recall the existence and uniqueness result related to the viscous shallow water system (\ref{sw0}) that can be obtained using the same procedure than in \cite{WaXu}
\begin{theorem} Let $s>0$, $u_0(0)$, $h_0(0) -  1 \in H^{s+2}(\mathbb{R}^2)$,
$\|h_0(t=0) - 1\|_{H^{s+2}(\mathbb{R}^2)} \ll 1$.
 Then there exists a positive time $T$, a unique solution $(u_0,h_0)$ of (\ref{sw0}) such that
$$ u_0, h_0- 1  \in L^\infty(0,T; H^{2+s}(\mathbb{R}^2)), \qquad
   \nabla u_0 \in L^2(0,T; H^{2+s}(\mathbb{R}^2)).$$
Furthermore, there exists a constant $c$ such that if
$\|h_0(t= 0) - 1 \|_{H^{s+2}(\mathbb{R}^2)} + \|u_0(t= 0)\|_{H^{s+2}(\mathbb{R}^2)} \le c$ then
we can choose $T= + \infty$ with the same control on $(h_0,u_0)$.
\end{theorem}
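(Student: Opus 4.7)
The plan is to treat (\ref{sw0}) as a parabolic--hyperbolic system of compressible Navier--Stokes type (with a variable viscosity $h_0$ and a beneficial friction damping $-\overline{\gamma}u_0/\operatorname{Re}$) and reproduce the Matsumura--Nishida / Wang--Xu scheme. I would first perturb around the rest state by writing $h_0 = 1+\eta$ with $\eta(0)$ small in $H^{s+2}(\mathbb{R}^2)$. The system becomes
\[
\partial_t\eta+\operatorname{div}_x u_0 = -\operatorname{div}_x(\eta u_0),\qquad
\partial_t u_0-\frac{1}{\operatorname{Re}}\bigl(\operatorname{div}_x D_x(u_0)+2\nabla_x\operatorname{div}_x u_0\bigr)+\frac{\overline{\gamma}}{\operatorname{Re}}u_0+\frac{1}{F^2}\nabla_x\eta = N(\eta,u_0),
\]
where $N$ gathers quadratic nonlinearities and the quasilinear corrections coming from the factor $h_0$ in front of the viscous stress and the transport term. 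The principal linear part is symmetric-hyperbolic in $\eta$ and strictly parabolic in $u_0$, with a zero-order damping that will be crucial for the small-data global statement.

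Next I would construct solutions by a standard iteration: given $(\eta^n,u^n)$, solve the linear problem where the coefficients $h_0$, $u_0$ in the quasilinear terms are frozen to $(1+\eta^n,u^n)$, and produce $(\eta^{n+1},u^{n+1})$. Uniform estimates come from differentiating the equations by $\partial^\alpha$, $|\alpha|\le s+2$, pairing $\partial^\alpha\eta$ with $\partial^\alpha\eta/F^2$ (using the symmetrizer $1/F^2$ for the pressure term) and $\partial^\alpha u_0$ with itself, integrating by parts, and controlling commutators by Moser--type product estimates in $H^{s+2}$. The parabolic momentum equation gives the expected maximal regularity $\nabla u_0\in L^2_t H^{s+2}_x$, and positivity of the quadratic form $\int |D_x(u_0)|^2+2|\operatorname{div}_x u_0|^2+\overline{\gamma}|u_0|^2$ provides coercivity. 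A contraction argument in a lower-norm space (e.g. $H^{s+1}$) then yields a unique local-in-time fixed point on $[0,T]$ with $T$ depending only on $\|(\eta_0,u_0(0))\|_{H^{s+2}}$; uniqueness and propagation of regularity finish the local part.

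For the small-data global result I would close an a~priori estimate that is uniform in $T$. The key energy functional has the form
\[
E(t)=\|\eta\|_{H^{s+2}}^2+\|u_0\|_{H^{s+2}}^2,\qquad
D(t)=\|\nabla u_0\|_{H^{s+2}}^2+\|u_0\|_{H^{s+2}}^2+\|\nabla\eta\|_{H^{s+1}}^2,
\]
where the last dissipation on $\eta$ is recovered à la Matsumura--Nishida by testing the momentum equation with $\nabla\eta$ and using the continuity equation to move the time derivative. The friction term supplies the $L^2$-damping of $u_0$ directly, which simplifies the argument compared to the frictionless case. If $E(0)\le c$ with $c$ small enough, the nonlinearities $N(\eta,u_0)$ are controlled by $\sqrt{E}\,D$ using Sobolev embeddings in $\mathbb{R}^2$, hence a differential inequality $\tfrac{d}{dt}E+\kappa D\le C\sqrt{E}\,D$ yields $E(t)\le E(0)$ for all $t\ge0$, proving global existence with the announced control.

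The main obstacle is the simultaneous management of the hyperbolic density equation and the parabolic velocity equation, especially the quasilinear factor $h_0$ in front of the viscous term: one must ensure that $h_0$ stays bounded away from $0$ and $+\infty$ uniformly, and that commutators $[\partial^\alpha,h_0^{-1}]\operatorname{div}_x(h_0 D_x u_0)$ do not spoil the top-order energy. This is handled by the smallness assumption on $\|h_0(0)-1\|_{H^{s+2}}$ (giving $\|\eta\|_{L^\infty}\ll 1$ via Sobolev embedding) together with tame Moser estimates; in $\mathbb{R}^2$ one has to be careful with the critical Sobolev index, which forces working at regularity $H^{s+2}$ with $s>0$ as assumed.
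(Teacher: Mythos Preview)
Your sketch is correct and is precisely the Wang--Xu/Matsumura--Nishida scheme the paper has in mind: the paper does not actually prove this theorem but simply recalls it as a known result ``that can be obtained using the same procedure than in \cite{WaXu}''. Your outline---perturbation around the rest state, symmetrizer $1/F^2$ for the hyperbolic part, parabolic maximal regularity for $u_0$, commutator/Moser estimates at level $H^{s+2}$, and the cross-term $\int\nabla\eta\cdot\partial_t u_0$ to recover dissipation of $\nabla\eta$ for the global-in-time bound---is exactly that procedure, so there is nothing to compare.
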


\noindent
{\bf Remark:} the smallness assumption on $h_0(t=0)-1$ can be replaced by $h_0(t,x)>\alpha >0$ to prove local well posedness provided that we work in Besov spaces: see the paper of {\sc R. Danchin} \cite{Danchin} for more details. However, we will need here this particular assumption to prove the well posedness of free surface Navier-Stokes equations in dimension $3$ and convergence to shallow water equations. In dimension $2$ for free surface Navier-Stokes equations, we just need an assumption similar to the one of \cite{Danchin}, namely that there is no vacuum. Of course the results are only local in time: for arbitrary time interval, we need a smallness assumption on $h_0(t=0)-1$ and $u_0(t=0)$. 

\medskip
 For global existence of weak solutions to a related viscous shallow water system, a result has been proved in \cite{BrDe} assuming an extra turbulent drag term.

\medskip

\noindent
{\bf Remark:} In this paper, we will choose $s>4$. Indeed, a careful analysis of the remainder shows
that the contained fourth order derivatives of $(h_0,u_0)$ have to be Lipschitz.

\subsection{Reduction of Navier-Stokes equations to a fixed domain}
  In what follows, we write the Navier-Stokes equations in a fixed domain: this is done using a Lagrangian formulation of the equations. We first search the fluid velocity and pressure in the form
$$
\displaystyle
u=u_a+\widetilde u,\quad p=p_a+\frac{\varepsilon F^2}{\rm Re}\widetilde p
$$
\noindent
Navier-Stokes equations (\ref{ns_ad}) then reads
\begin{equation}\label{ns_ap}
\begin{array}{ll}
\displaystyle
\partial_t \widetilde u+(u_a+\widetilde u).\nabla\widetilde u+\widetilde u.\nabla\,{u}_a+\frac{\nabla\widetilde p}{\rm Re}=\frac{1}{\rm Re}{\rm div}\big(D(\widetilde u)\big)+f_a,\\
\displaystyle
{\rm div}\widetilde u=0.
\end{array}
\end{equation}
\noindent
in the fluid domain $\Omega_{\varepsilon,t}=\left\{(x,z)/\,x\in\mathbb{X},\:0\leq z\leq \varepsilon h(x,t)\right\}$. 
The boundary conditions are then written as
\begin{equation} \label{ns_ap1}
\displaystyle
\widetilde u_V|_{z=0}=0,\quad \partial_z \widetilde u_H|_{z=0}=\varepsilon\overline{\gamma}\widetilde u_H|_{z=0}.
\end{equation}
\noindent
At the free surface, one has
\begin{equation} \label{ns_ap2}
\begin{array}{ll}
\displaystyle
\big(D(\widetilde u)-\widetilde p\big)_{z=\varepsilon h}\vec{n}_{\varepsilon}=g_a,\\
\partial_t(\varepsilon h)+(u_{a,H}+\widetilde u_H)|_{z=\varepsilon h}.\nabla_x(\varepsilon h)=u_{a,V}+\widetilde u_V|_{z=\varepsilon h}.
\end{array}
\end{equation}
\noindent
Next, we introduce the Lagrangian change of variable
$$
\displaystyle
\frac{dX}{dt}=u_{a,H}(t,X,Z)+\widetilde u_H(t,X,Z),\quad \frac{dZ}{dt}=u_{a,V}(t,X,Z)+\widetilde u_V(t,X,Z).
$$ 
\noindent
For the sake of simplicity, we assume that the initial fluid domain is $\mathbb{X}\times (0,\,\varepsilon)$: the rigorous justification will remain true for $|<h_0(t=0)>-1|\ll 1$ and $\bar{\gamma}$ sufficienty large. Denote $x_0\in\mathbb{X}, z_0\in(0,\varepsilon)$, the coordinates in this domain and $A$ the Jacobian matrix of the Lagrangian change of variable
$$
\displaystyle
A=\left(\begin{array}{cc}\displaystyle\frac{\partial X}{\partial x_0} & \displaystyle\frac{\partial X}{\partial z_0}\\\displaystyle (\nabla_{x_0} Z)^T &\displaystyle \frac{\partial Z}{\partial z_0}\end{array}\right).
$$
\noindent
The chain rules are given by
$$
\displaystyle
\left(\begin{array}{c}\nabla_{x_0}\\\partial_{z_0}\end{array}\right)=A^T\left(\begin{array}{c}\nabla_{x}\\\partial_{z}\end{array}\right),\quad \left(\begin{array}{cc}\nabla_{x_0}\overline{u}& \partial_{z_0}\overline{u}\\\nabla_{x_0}\overline{w}^T  & \partial_{z_0}\overline{w}\end{array}\right)=\left(\begin{array}{cc}\nabla_{x}\overline{u}& \partial_{z}\overline{u}\\\nabla_{x}\overline{w}^T  & \partial_{z}\overline{w}\end{array}\right)A.
$$
\noindent
The fluid height is defined implicitely as $\varepsilon h\big(t,X(t,x_0,\varepsilon)\big)=Z(t,x_0,\varepsilon)$ so that the impermeability condition is satisfied. Moreover, one has
$$
\displaystyle
\varepsilon\nabla_x h(t,X(t,,x_0,\varepsilon))=(\frac{\partial X}{\partial x_0})^{-T}\nabla_{x_0}Z(t,x_0,\varepsilon).
$$
\noindent
The Lagrangian velocity is defined as $\displaystyle \overline{u}=\widetilde{u}(t,X)$ and $\overline{u}$ defined on the fixed thin domain $\mathbb{X}\times(0, \varepsilon)$. We further introduce the Lagrangian pressure $\overline{p}=\widetilde p(t,X)$. In that setting, Navier-Stokes equations are written as
{\setlength\arraycolsep{1pt}
\begin{eqnarray}
\displaystyle
\partial_t \overline{u}+(A^{-1}\overline{u}).\nabla \overline{u}_a+\frac{A^{-T}\nabla\overline{p}}{{\rm Re}}&=&\frac{A^{-T}}{{\rm Re}}\Big({\rm div}\big(A^T\mathcal{P}\big)-\nabla A:\mathcal{P}\Big)+\overline{f}_a,
\label{Lagr1}\\
\displaystyle
{\rm div}(A^{-1}\overline{u})&=&\overline{\sigma}_a, \label{Lagr2}
\end{eqnarray}} 
\noindent
where $\overline{f}_a=f_a(t,X)$, $\overline{u}_a=u_a(t,X)$, $(\nabla A:\mathcal{P})_i=\sum_{j,k}\partial_k A_{j,i}\mathcal{P}_{j,k}$. The matrix $\mathcal{P}$ is the transformed deformation tensor:
$$
\displaystyle
\mathcal{P}=(\nabla\overline{u})A^{-1}A^{-T}+A^{-T}(\nabla\overline{u})^TA^{-T}.
$$
\noindent
The boundary conditions at the free surface $z_0=\varepsilon$ reads
\begin{equation}\label{Lagr3}
\displaystyle
\Big(\mathcal{P}|_{z_0=\varepsilon}-p|_{z_0=\varepsilon}\,Id\Big){\vec n}=0, \quad \vec{n}=\left(\begin{array}{c} -(\frac{\partial X}{\partial x_0})^{-T}\nabla_{x_0}Z(t,x_0,\varepsilon)\\1\end{array}\right),
\end{equation}
whereas the boundary conditions at the bottom $z=0$ are written as
\begin{equation} \label{Lagr4}
\displaystyle
\overline{u}_{V}|_{z_0}=0,\quad {\rm det}(\frac{\partial X}{\partial x_0})|\partial_{z_0}\overline{u}_H|_{z_0=0}=\varepsilon\overline{\gamma}\,\overline{u}_H|_{z_0=0}.
\end{equation}
The main result of the paper is
\begin{theorem} There exists $C,K>0$ so that if we assume $\|\overline u_0\|_2 \le C^2 \varepsilon^{3/2}$,  then there is
a unique strong solution of \eqref{Lagr1}-\eqref{Lagr4} such that 
$$\sup_{(0,T)} \|\overline u\|^2_2 + \int_0^T \|\overline u\|_3^2 + \|\partial_t \overline u\|_1 + \|\overline p\|_2^2 + \|\partial_t \overline p\|_0^3 Ê<K\varepsilon^{3/2}.$$
\end{theorem}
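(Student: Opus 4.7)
The natural approach is a fixed-point scheme built on the linear theory of Section~3. I would let $\mathcal{B}_\varepsilon$ denote the set of pairs $(\overline{v},\overline{q})$ satisfying the estimate displayed in the theorem (with some $K$ large to be fixed), and construct the iteration $(\overline{v},\overline{q})\mapsto (\overline{u},\overline{p})$ by freezing the Lagrangian flow $X[\overline{v}]$ and the Jacobian $A=A[\overline{v}]$, and solving the linearization of \eqref{Lagr1}--\eqref{Lagr4} in which $A$ and the $\overline{u}$-dependent coefficients are treated as known data. The goal is to show self-mapping of $\mathcal{B}_\varepsilon$ together with contraction in a weaker norm (one derivative less), so that Banach's theorem will yield the unique strong solution.

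The first step is to apply the Section~3 well-posedness theorem to the frozen-coefficient linear system and extract an energy estimate of the form
\[
\|(\overline{u},\overline{p})\|_{\mathcal{B}}^2 \le C_1 \|\overline{u}_0\|_2^2 + C_1\bigl(\|\overline{f}_a\|_{\mathcal{F}}^2 + \|\overline{\sigma}_a\|_{\mathcal{G}}^2\bigr) + C_1\bigl(\|A[\overline{v}]-I\|_{*} + \|\overline{v}\|_{\mathcal{B}}\bigr)\|\overline{v}\|_{\mathcal{B}}^2,
\]
with $C_1$ \emph{uniform in $\varepsilon$}. This uniformity is precisely the content of the three tools developed in Section~3: anisotropic Sobolev embeddings in $\mathbb{X}\times(0,\varepsilon)$ with explicit $\varepsilon$-factors, Korn's inequality in thin domains compatible with Navier-slip at the bottom and the free-surface conditions at $z_0=\varepsilon$, and a Stokes regularity theorem yielding $\|\overline{p}\|_2^2$ and $\|\partial_t\overline{p}\|_0^3$ from the velocity and data. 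The unusual $L^3_t$ norm on $\partial_t\overline{p}$ arises because the time-differentiated pressure is recovered from a Stokes problem whose data contain a product of two velocity factors with only $L^\infty_t L^2\cap L^2_t H^2$ integrability in time.

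Next I would estimate the residuals and the Jacobian. The high-order ansatz constructed in Section~2 ensures that $\overline{f}_a$ and $\overline{\sigma}_a$ are of size $\mathcal{O}(z^3)$ in the bulk and $\mathcal{O}(\varepsilon^3)$ at the free surface; integrating over the slab of thickness $\varepsilon$ converts these into $\mathcal{O}(\varepsilon^{3/2})$ bounds in the relevant function spaces, exactly matching the target scale. For $(\overline{v},\overline{q})\in\mathcal{B}_\varepsilon$, writing $A[\overline{v}]-I=\int_0^t \nabla\overline{v}\,ds +\mathrm{(quadratic)}$ and using the anisotropic embeddings, the quantity $\|A[\overline{v}]-I\|_{L^\infty_t H^2}$ is controlled by a small multiple of $K^{1/2}\varepsilon^{3/4}$, so $A$ remains invertible with $\|A^{-1}-I\|$ small, and the non-constant coefficient corrections as well as the nonlinearities $(A^{-1}\overline{u})\cdot\nabla\overline{u}_a$ and $\nabla A:\mathcal{P}$ are absorbed perturbatively.

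Inserting these bounds into the linear estimate gives, schematically,
\[
\|(\overline{u},\overline{p})\|_{\mathcal{B}}^2 \le C_1 C^4 \varepsilon^3 + C_2 \varepsilon^{3/2} + C_3\varepsilon^{1/2}(K\varepsilon^{3/2})^2,
\]
so for $K$ large and $\varepsilon$ small the right-hand side is at most $K\varepsilon^{3/2}$, establishing self-mapping; the analogous estimate for the difference of two iterates in one derivative less gives contraction, completing the proof. The principal obstacle is the delicate bookkeeping of $\varepsilon$-powers: each of the anisotropic Sobolev, Korn and Stokes inequalities in Section~3 must be \emph{sharp} enough that the final nonlinear remainder $\varepsilon^{1/2}(K\varepsilon^{3/2})^2$ genuinely beats $K\varepsilon^{3/2}$. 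This is also where the choice of the exponent $3/2$, and of the $L^3_t$ integrability for $\partial_t\overline{p}$ in particular, becomes forced rather than arbitrary: any looseness in the thin-domain constants would destroy this closure.
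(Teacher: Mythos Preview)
Your overall architecture---iterate via the linear theory of Section~3, control the forcing by the $\mathcal{O}(\varepsilon^3)$ residuals of the ansatz, and close by showing the nonlinear remainder beats the target scale---is exactly the paper's strategy. Two points, however, need correction.

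\medskip
\textbf{Wrong reference Jacobian.} The Lagrangian flow $(X,Z)$ solving \eqref{Lagr1}--\eqref{Lagr4} is driven by $u_a+\overline{u}$, not by $\overline{u}$ alone. Hence your formula $A[\overline{v}]-I=\int_0^t\nabla\overline{v}\,ds+\cdots$ is false: the Jacobian carries an $O(1)$ contribution from $u_a$ that does \emph{not} vanish with $\varepsilon$ or with $\|\overline{v}\|$. The paper handles this by introducing the shallow-water Lagrangian map $(X_0,Z_0)$ with Jacobian $A_0$ (driven by $u_0$ alone) and writing the iterative scheme \eqref{mom_k} with $A_0$ on the left-hand side; all nonlinear source terms then involve $A_k^{-1}-A_0^{-1}$, and the small quantity controlled in Step~1 is $X^k-X_0$, not $X^k-x_0$. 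Only this difference is $O(\|u^k\|)$. Your scheme of freezing the full $A[\overline{v}]$ in the linear operator would also require extending Theorem~3 from $A_0$ to general nearby $A$, which the paper avoids by always reducing to the fixed $A_0$ problem.

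\medskip
\textbf{The $L^3_t$ pressure norm.} Your paragraph justifying the exponent in $\|\partial_t\overline{p}\|_0^3$ is building an argument around a typo: compare the definition of the working space $\|(u,p)\|_{\mathbb{X}}^2$ in Section~4, where the term is $\|\partial_t p\|_0^2$, and the linear estimate \eqref{en_fin}, which controls $\int_0^T\|p_t\|_0^2$. There is no $L^3_t$ structure in the proof.

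\medskip
Finally, for the fixed point itself the paper invokes Tychonov in the critical space (boundedness plus weak continuity of the iteration map suffice), reserving a Banach argument only if one is willing to work in slightly higher regularity; your contraction-in-a-weaker-norm plan is viable but is the non-sharp alternative.
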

   This gives the expected existence result of Navier-Stokes equations with free surface namely the system  \eqref{ns_ad}, \eqref{kin0}, \eqref{slip0}, \eqref{sliph_a} and its convergence to the solution of the shallow-water system in a Lagrangian form namely a control of the solution of \eqref{ns_ap}--\eqref{ns_ap2} with respect to
$\varepsilon$.
\begin{corollary} Let $(h_0,u_0)$ be the solution of the viscous shallow-water system \eqref{sw0} such
that $(h_0-1,u_0)$ small enough in ${\cal C}(0,T; H^{2+s}(\R^2)$ with $s>4$. Let $(h_0^\epsilon, u_0^\epsilon)$ such that $\|(u_0^\epsilon-u_a)|_{t=0}\|_2 \le C\varepsilon^{3/2}$ and $h_0^\varepsilon = h_0$ and $u_0^\epsilon$ satisfying the compatibility conditions. Then System  \eqref{ns_ad}, \eqref{kin0}, \eqref{slip0}, \eqref{sliph_a}  is well posed and $u^\epsilon - u_a$ satisfy in its Lagrangian form
the estimates given by Theorem~2.
\end{corollary}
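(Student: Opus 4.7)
The proof of the corollary is essentially a translation of Theorem 2 back into the Eulerian/physical picture, so the plan is to exhibit the corollary as a direct consequence of the main theorem once one has set up the perturbation variable correctly and checked that the hypotheses on the initial data propagate through the Lagrangian change of coordinates.

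First, I would introduce the perturbation $\widetilde u = u^\varepsilon - u_a$, $\widetilde p = ({\rm Re}/(\varepsilon F^2))(p^\varepsilon - p_a)$ in the moving domain $\Omega_{\varepsilon,t}$ and verify that it solves (\ref{ns_ap})--(\ref{ns_ap2}), with source terms $f_a$, $g_a$ and a kinematic remainder that are of order $\mathcal{O}(\varepsilon^3)$ thanks to (\ref{ns_approx}) and the matching conditions at $z=\varepsilon h_a$; the regularity assumption $s>4$ on the shallow-water solution is exactly what is needed so that $(f_a,g_a)$ lie in the right functional spaces with norms controlled polynomially in $\varepsilon$. I would then apply the Lagrangian change of variable generated by the full velocity $u_a + \widetilde u$ to straighten the free boundary to $\mathbb{X}\times(0,\varepsilon)$, obtaining the system (\ref{Lagr1})--(\ref{Lagr4}) for $\overline u$, $\overline p$. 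Since by hypothesis $h_0^\varepsilon = h_0$, the initial domain is the flat strip $\mathbb{X}\times(0,\varepsilon)$ (or can be made so by a preliminary diffeomorphism with harmless cost), so that $\overline u|_{t=0} = \widetilde u|_{t=0} = (u_0^\varepsilon - u_a)|_{t=0}$ and therefore $\|\overline u|_{t=0}\|_2 \le C\varepsilon^{3/2}$ by the corollary's assumption.

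Next, I would invoke the compatibility conditions imposed on $u_0^\varepsilon$: they ensure that the initial perturbation is divergence-free with respect to the Lagrangian operators at $t=0$, satisfies the Navier-slip boundary condition at $z_0 = 0$, and has a second-order time derivative compatible with the free-surface stress condition at $z_0 = \varepsilon$. This is exactly what is needed to make $\overline u|_{t=0}$ admissible data for Theorem 2, so that the hypothesis $\|\overline u_0\|_2 \le C^2\varepsilon^{3/2}$ of that theorem is met (possibly after adjusting $C$). Theorem 2 then produces a unique strong solution $(\overline u,\overline p)$ of (\ref{Lagr1})--(\ref{Lagr4}) on $[0,T]$ with the announced estimate $\sup_{(0,T)}\|\overline u\|_2^2 + \int_0^T(\|\overline u\|_3^2 + \|\partial_t\overline u\|_1 + \|\overline p\|_2^2 + \|\partial_t\overline p\|_0^3) < K\varepsilon^{3/2}$.

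Finally, I would undo the Lagrangian transformation: from the bound on $\overline u$ one recovers $\widetilde u = u^\varepsilon - u_a$ and the free surface $h^\varepsilon$ by inverting the flow map, which is a diffeomorphism on the time interval $[0,T]$ because of the smallness estimates (the Jacobian $A$ stays close to the identity on each horizontal slice up to the vertical dilation by $\varepsilon$, anisotropic Sobolev embeddings giving $L^\infty$ control of $\nabla_{x_0}\overline u$ that is compatible with the $\varepsilon$-scaling). This yields existence and uniqueness for the original system (\ref{ns_ad}), (\ref{kin0}), (\ref{slip0}), (\ref{sliph_a}), and the estimate on $\overline u$ transfers to an estimate on $u^\varepsilon - u_a$, which in turn gives convergence to the shallow-water profile modulo an $\mathcal{O}(\varepsilon^3)$ remainder absorbed into $u_a - (u_0,\varepsilon w_0 \ldots)$. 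The main obstacle is purely bookkeeping: checking that the compatibility conditions are indeed preserved by the Lagrangian map, that the anisotropic factors of $\varepsilon$ introduced by the thin-domain scaling are matched correctly between the $\mathcal{O}(\varepsilon^3)$ remainders coming from the ansatz and the $\mathcal{O}(\varepsilon^{3/2})$ size given by Theorem 2, and that the inversion of the flow stays valid globally on $[0,T]$; no new analytic ingredient beyond Theorem 2 is required.
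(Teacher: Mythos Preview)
Your proposal is correct and matches the paper's approach: the paper does not give a separate, self-contained proof of the corollary but treats it as an immediate consequence of Theorem~2 through the Lagrangian reduction described in Section~2.2 (introducing $\widetilde u=u^\varepsilon-u_a$, passing to Lagrangian variables to obtain system (\ref{Lagr1})--(\ref{Lagr4}), and then applying the main theorem). Your steps---setting up the perturbation, checking that the $\mathcal{O}(\varepsilon^3)$ remainders from (\ref{ns_approx}) land in the right spaces thanks to $s>4$, verifying that $h_0^\varepsilon=h_0$ together with the smallness of $h_0-1$ makes the initial domain (essentially) the flat strip, transferring the $H^2$-smallness hypothesis to $\overline u|_{t=0}$, invoking Theorem~2, and finally inverting the Lagrangian flow---are exactly the bookkeeping the paper leaves to the reader.
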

\medskip
Note that there is no assumption on the time interval: the solution of Navier-Stokes system is
defined on the same existence time as the solution of the viscous shallow-water system \eqref{sw0}.

\medskip

\noindent {\it The associated linear system.}
Next we introduce a linear problem that will be usefull to prove the well-posedness of the {\it full} Navier-Stokes system. Indeed, this will be done using a fixed point argument. First, let us introduce the following Lagrangian system of coordinates
$$
\displaystyle
\frac{d X_0}{dt}=u_0(X_0),\quad \frac{d Z_0}{dt}=-Z_0\,{\rm div}\big(u_0\big)(X_0),
$$
\noindent
with the initial conditions $X_0(0,.)=x_0$, $Z_0(0,.)=z_0$. It is easily proved that $X_0,Z_0$ satisfy
$$
\displaystyle
\frac{\partial X_0}{\partial z_0}=0,\quad Z_0(t,x_0,z_0)=z_0 h_0(t,X_0(t,x_0)),\quad {\rm det}\Big(\frac{\partial X_0}{\partial x_0}\Big)=h_0^{-1}(t,X_0(t,x_0)).
$$
\noindent
In what follows, we will consider a special linear problem that is obtained if we substitute $X=X_0$ and $Z=Z_0$ in Navier-Stokes equations written in Lagrangian coordinates and drop order one differential operators. More precisely, this system is written as
\begin{equation}\label{lin_gen_0}
\displaystyle
\partial_t\overline{u}+\frac{A_0^{-T}\nabla p}{\rm Re}=\frac{A_0^{-T}}{\rm Re}{\rm div}(A_0^T\mathcal{P}_0)+f,
\end{equation}
with $\mathcal{P}_0=\nabla(\overline{u})A_0^{-1}A_0^{-T}+A_0^{-T}(\nabla\overline{u})^TA_0^{-T}$. The divergence condition reads
\begin{equation}\label{div_gen_0}
\displaystyle
{\rm div}(A_0^{-1}\overline{u})=\sigma.
\end{equation}
\noindent
The boundary conditions at the bottom are 
\begin{equation}\label{bc_b_0}
\displaystyle
\overline{u}_V|_{z_0=0}=0,\quad h_0^{-1}\partial_{z_0}\overline{u}_H|_{z_0=0}=\varepsilon\overline{\gamma}\,\overline{u}_H|_{z_0=0}+g_1,
\end{equation}
\noindent
whereas the boundary conditions at the free surface read
\begin{equation}\label{bc_s_0}
\displaystyle
\Big(A_0^T\mathcal{P}_0-p\,Id\Big)|_{z_0=\varepsilon}{\rm e}_3=g_2.
\end{equation}

\section{Linearized Navier-Stokes around the approximate solution of viscous shallow-water equations}
 This section is devoted to study the linearized Navier-Stokes equations around the approximate
solution of viscous shallow-water (\ref{lin_gen_0},\ref{div_gen_0},\ref{bc_b_0},\ref{bc_s_0}) that was introduced in the previous section. We begin by a subsection dealing
with Sobolev inequalities, Korn inequality and regularity result for Stokes problem in thin domain.
In the second subsection, we give well posedness result for the full linearized system.

\subsection{Anisotropic estimates in thin domains}

In this part, we recall Sobolev inequalities that arises in thin
domains and pay a special attention to the constants arising in such
inequalities. We also consider a linear Stokes problem in a thin
domain that will be useful to obtain estimates on the Sobolev norms
of $(\overline u,\overline p)$. 

\subsubsection{Sobolev inequalities}

In what follows, we use that mean of $\widetilde u_1$ in the vertical direction is $0$ to write Poincar\'e, Agmon and Ladyzhenskaia anisotropic
inequalities in {\it thin} domains. Here the domain $\Omega$ is
$\Omega=\mathbb{X}^n\times(0,\,\varepsilon)$. In this paper, we will use the following notations: $\|u\|_k=\|u\|_{H^k(\Omega)}$, for any $k\in\mathbb{N}$ with the convention $H^0(\Omega)=L^2(\Omega)$. Similarly, one denotes $|u|_k=|u|_{H^k(\mathbb{X})}$. 
\begin{proposition}\label{int1}
 There exists a constant $C$ independent of $\varepsilon$ such that the following Ladyzhenskaia and Agmon estimates hold true:
$$
\displaystyle
\|u\|_{L^6}\leq C\varepsilon^{-\frac{1}{3}}\|u\|_{1},\quad \|u\|_{L^\infty}\leq C\varepsilon^{-\frac{1}{2}}\|u\|_2.
$$
\end{proposition}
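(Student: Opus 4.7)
The plan is to reduce both inequalities to classical Sobolev embeddings on the fixed reference domain $\Omega_1=\mathbb{X}^n\times(0,1)$ by a one-shot rescaling in the vertical variable, then track the powers of $\varepsilon$ carefully. Introduce $z'=z/\varepsilon$ and $\tilde u(x,z')=u(x,\varepsilon z')$, so that $\tilde u$ is defined on $\Omega_1$. Elementary changes of variable give
$$
\|\tilde u\|_{L^p(\Omega_1)}^p=\varepsilon^{-1}\|u\|_{L^p(\Omega_\varepsilon)}^p,\qquad
\|\partial_x^\alpha\partial_{z'}^\beta \tilde u\|_{L^2(\Omega_1)}^2=\varepsilon^{2\beta-1}\|\partial_x^\alpha\partial_z^\beta u\|_{L^2(\Omega_\varepsilon)}^2.
$$
Since every vertical derivative on $\tilde u$ carries a positive factor of $\varepsilon$, one obtains in particular
$$
\|\tilde u\|_{H^1(\Omega_1)}\le C\varepsilon^{-1/2}\|u\|_{H^1(\Omega_\varepsilon)},\qquad
\|\tilde u\|_{H^2(\Omega_1)}\le C\varepsilon^{-1/2}\|u\|_{H^2(\Omega_\varepsilon)},
$$
the point being that only the $L^2$ part contributes the singular factor $\varepsilon^{-1/2}$ while the derivative terms are either better (vertical) or of the same order (horizontal).

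Now apply the classical continuous embeddings on the fixed domain $\Omega_1\subset\mathbb{R}^{n+1}$, for which the constants depend only on $\Omega_1$ and hence not on $\varepsilon$. For the Ladyzhenskaia estimate, $H^1(\Omega_1)\hookrightarrow L^6(\Omega_1)$ (valid in dimension $2$ and $3$) gives
$$
\varepsilon^{-1/6}\|u\|_{L^6(\Omega_\varepsilon)}=\|\tilde u\|_{L^6(\Omega_1)}\le C\|\tilde u\|_{H^1(\Omega_1)}\le C\varepsilon^{-1/2}\|u\|_1,
$$
which rearranges to $\|u\|_{L^6(\Omega_\varepsilon)}\le C\varepsilon^{-1/3}\|u\|_1$. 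For the Agmon estimate, $H^2(\Omega_1)\hookrightarrow L^\infty(\Omega_1)$, combined with the fact that the $L^\infty$ norm is invariant under the rescaling, yields
$$
\|u\|_{L^\infty(\Omega_\varepsilon)}=\|\tilde u\|_{L^\infty(\Omega_1)}\le C\|\tilde u\|_{H^2(\Omega_1)}\le C\varepsilon^{-1/2}\|u\|_2.
$$

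There is no serious obstacle: the only thing to be careful about is the bookkeeping of the rescaling factor in each mixed derivative and the observation that the worst power of $\varepsilon$ always comes from the undifferentiated $L^2$ term. One should also remark that this argument is agnostic to the horizontal geometry $\mathbb{X}=\mathbb{T}^n$ or $\mathbb{R}^n$, since the Sobolev embeddings on $\Omega_1$ hold in both cases with a constant independent of $\varepsilon$; the mention of the vanishing vertical mean of $\widetilde u_1$ in the surrounding text is not needed for the two inequalities of Proposition~\ref{int1} themselves, but will be used together with them to derive Poincaré-type estimates elsewhere.
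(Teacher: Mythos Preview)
Your argument is correct; the rescaling $z'=z/\varepsilon$ together with the standard Sobolev embeddings on the fixed domain $\Omega_1$ is precisely the way these thin-domain constants are obtained. The paper does not give its own proof but simply refers to \cite{Temam}, where the same rescaling approach is used, so your proof is in line with the intended one.
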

The proof of this proposition is found in \cite{Temam}. From this estimate and the inequality $\displaystyle\|uv\|_{L^2}\leq \varepsilon^{\frac{1}{6}}\|u\|_{L^6}\|v\|_{L^6}$,  one easily proves the following tame estimates for the products of functions and composition of functions.
\begin{lemma}\label{int2}
We will use the following estimates: if $t>\frac{n}{2}$
$$
\begin{array}{ll}
\displaystyle
|u\,v|_{H^s(\mathbb{X})}\leq C\big(|u|_{H^t(\mathbb{X})}|v|_{H^s(\mathbb{X})}+|u|_{H^s(\mathbb{X})}|v|_{H^t(\mathbb{X})}\big),\\
\displaystyle
\|u\,v\|_{H^s(\Omega)}\leq \frac{C}{\sqrt{\varepsilon}}\big(\|u\|_{H^t(\Omega)}\|v\|_{H^s(\Omega)} +\|u\|_{H^s(\Omega)}\|v\|_{H^t(\Omega)}\big),\\
\end{array}
$$
We have also the following property: $\hbox{For all } F: \R^p \rightarrow \R \hbox{ smooth enough with } F(0)=0 \hbox{ then }$
$$ \|F(u)\|_s\leq C(\|u\|_{L^{\infty}})\|u\|_s.
$$
\end{lemma}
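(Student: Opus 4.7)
My plan is to treat the three estimates in turn, relying on Moser-type arguments and carefully tracking the $\varepsilon$-powers in the thin-domain version.

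For the first inequality on $\mathbb{X}$, I would apply the classical Moser product estimate. Expand $\partial^\alpha(uv)=\sum_{\beta\le\alpha}\binom{\alpha}{\beta}\partial^\beta u\,\partial^{\alpha-\beta}v$ for $|\alpha|=s$, then separate the extreme cases $|\beta|\in\{0,s\}$, where the $L^\infty$ norm of one factor is bounded directly through the Sobolev embedding $H^t(\mathbb{X})\hookrightarrow L^\infty(\mathbb{X})$ (valid since $t>n/2$), from the intermediate cases where I would use Gagliardo--Nirenberg interpolation $\|\partial^\beta u\|_{L^p}\lesssim |u|_{H^t}^{1-\theta}|u|_{H^s}^\theta$ with conjugate exponents $1/p+1/q=1/2$ so that Hölder closes the argument. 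Summing over $\beta$ yields the stated bilinear bound.

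For the second inequality on the thin domain $\Omega=\mathbb{X}^n\times(0,\varepsilon)$, I would follow the same Leibniz expansion but substitute anisotropic inequalities to keep track of the $\varepsilon$-prefactor. The base case $s=0$ is already prepared in the text: chaining $\|uv\|_{L^2(\Omega)}\le \varepsilon^{1/6}\|u\|_{L^6}\|v\|_{L^6}$ with Proposition 1 gives $\|uv\|_0\le C\varepsilon^{-1/2}\|u\|_1\|v\|_1$, which is exactly the required bound (with $t=1$). For $s\ge 1$, after Leibniz expansion each term splits into a high-derivative factor controlled in $L^2$ and a low-derivative factor to be controlled in $L^\infty$ through Agmon $\|\cdot\|_{L^\infty(\Omega)}\le C\varepsilon^{-1/2}\|\cdot\|_2$; intermediate Hölder pairings are handled by the $L^6$--$L^6$--$L^2$ trick above, which also contributes a single $\varepsilon^{-1/2}$. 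The key combinatorial check is that in every splitting exactly \emph{one} factor $\varepsilon^{-1/2}$ appears and no additional negative power creeps in: the $\varepsilon^{+1/6}$ from Hölder compensates half of the two $\varepsilon^{-1/3}$ from the $L^6$ embeddings, giving precisely the claimed global factor.

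For the composition inequality, I would invoke Moser's composition lemma. Writing $F(u)=u\,G(u)$ with $G(v)=\int_0^1 F'(\tau v)\,d\tau$, the Faà di Bruno formula yields
\[
\partial^\alpha F(u)=\sum_{k\ge 1}\sum_{\alpha_1+\cdots+\alpha_k=\alpha}c_{k,\alpha}\,F^{(k)}(u)\prod_{j=1}^k\partial^{\alpha_j}u.
\]
Each scalar factor $F^{(k)}(u)$ is pointwise bounded by $C(\|u\|_{L^\infty})$ since $F$ is smooth, while the product of derivatives is estimated in $L^2$ by iterating the tame product estimate already proven, combined with Gagliardo--Nirenberg interpolation between $H^t\hookrightarrow L^\infty$ and $H^s$. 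The assumption $F(0)=0$ ensures that the sum starts from $k\ge 1$, so at least one derivative lands on $u$, which produces the $\|u\|_s$ factor in the bound (rather than the inhomogeneous $\|u\|_s+1$).

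I expect the only nontrivial point to be the bookkeeping of $\varepsilon$-powers in the second inequality. The first and third bounds are essentially folklore; the real content here is isolating that, regardless of the Leibniz splitting in the thin-domain case, the composition of anisotropic Ladyzhenskaia/Agmon bounds produces \emph{exactly} one factor of $\varepsilon^{-1/2}$. A clean way to enforce this is to prove the estimate first for $s=0$ (as above) and then upgrade to $s\ge 1$ by induction, always applying the anisotropic bounds to the factor of lowest Sobolev order at each step so that the $\varepsilon$-exponent stays fixed along the induction.
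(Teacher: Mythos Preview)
Your proposal is correct and aligns with the paper's own approach: the paper does not give a detailed proof of this lemma, but merely remarks that it follows from Proposition~\ref{int1} together with the H\"older-type inequality $\|uv\|_{L^2}\le \varepsilon^{1/6}\|u\|_{L^6}\|v\|_{L^6}$, which is exactly the pair of ingredients you use to set up the base case and then run the standard Moser argument. Your elaboration (Leibniz expansion, Gagliardo--Nirenberg interpolation, Fa\`a di Bruno for the composition) is the natural fleshing-out of that one-line sketch.

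One small caution on the composition estimate: when you write that the product of derivatives is handled ``by iterating the tame product estimate already proven'', be careful not to iterate the thin-domain product bound itself, since each application would contribute an extra factor $\varepsilon^{-1/2}$ and the stated composition bound carries no explicit $\varepsilon$. The correct route---which you also mention---is to interpolate directly between $L^\infty$ and $H^s$ via Gagliardo--Nirenberg (so that all $\varepsilon$-dependence is absorbed into $C(\|u\|_{L^\infty})$), rather than passing through $H^t$ and the anisotropic Agmon bound. With that adjustment the argument goes through as you describe.
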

\medskip
We will also need the following trace theorems: 
\begin{lemma}\label{trac1}
Let $u\in H^1$ is so that $u|_{z=0}=0$ then one has $|u|_{\frac{1}{2}}\leq C\|u\|_1$, with $C$ independent of $\varepsilon$. In the general case, one has
$\displaystyle|u|_{\frac{1}{2}}\leq \frac{C}{\sqrt{\varepsilon}}\|u\|_1$.
\end{lemma}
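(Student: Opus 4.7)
The plan is to prove both inequalities by working on the horizontal Fourier side and reducing to one-dimensional manipulations in $z$, being careful to track the $\varepsilon$-dependence by hand. Let $\hat u(\xi,z)$ denote the horizontal Fourier transform of $u$; the trace at the free surface $z=\varepsilon$ has $|u(\cdot,\varepsilon)|_{1/2}^2 = \int (1+|\xi|^2)^{1/2} |\hat u(\xi,\varepsilon)|^2\,d\xi$.

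For the sharp estimate under the assumption $u|_{z=0}=0$, I would start from the elementary identity
\[
|\hat u(\xi,\varepsilon)|^2 \;=\; 2\,\mathrm{Re}\!\int_0^\varepsilon \hat u(\xi,z)\,\overline{\partial_z \hat u(\xi,z)}\,dz \;\leq\; 2\,\|\hat u(\xi,\cdot)\|_{L^2_z}\,\|\partial_z\hat u(\xi,\cdot)\|_{L^2_z},
\]
which uses $\hat u(\xi,0)=0$ in an essential way. Multiplying by $(1+|\xi|^2)^{1/2}$, bounding $(1+|\xi|^2)^{1/2} \leq 1 + |\xi|$, and distributing the $|\xi|$ factor via Cauchy--Schwarz in $\xi$ onto $\hat u$ (interpreted as a horizontal derivative) yields a bound by $\|u\|_{L^2}\|\partial_z u\|_{L^2} + \|\nabla_x u\|_{L^2}\|\partial_z u\|_{L^2}$, which is $\leq \|u\|_1^2$ with an absolute constant; no $\varepsilon$ enters.

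For the general case I would repair the failure of $\hat u(\xi,0)=0$ by a mean value argument: there exists $z_0=z_0(\xi)\in(0,\varepsilon)$ with $|\hat u(\xi,z_0)|^2 \leq \varepsilon^{-1}\|\hat u(\xi,\cdot)\|_{L^2(0,\varepsilon)}^2$. Inserting this in
\[
|\hat u(\xi,\varepsilon)|^2 \;=\; |\hat u(\xi,z_0)|^2 + 2\,\mathrm{Re}\!\int_{z_0}^\varepsilon \hat u\,\overline{\partial_z \hat u}\,dz,
\]
I obtain $|\hat u(\xi,\varepsilon)|^2 \leq \varepsilon^{-1}\|\hat u\|_{L^2_z}^2 + 2\|\hat u\|_{L^2_z}\|\partial_z \hat u\|_{L^2_z}$. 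After multiplying by $(1+|\xi|^2)^{1/2}$, using $|\xi| \leq \tfrac12(1+|\xi|^2)$, and integrating in $\xi$, the new boundary term contributes exactly the announced $\varepsilon^{-1}\|u\|_1^2$, while the second term is again bounded independently of $\varepsilon$ as above.

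I expect the only delicate point to be the bookkeeping of $\varepsilon$ powers. It is tempting to invoke Poincar\'e's inequality in $z$ when $u|_{z=0}=0$ to trade the mixed product $\|\hat u\|_{L^2_z}\|\partial_z\hat u\|_{L^2_z}$ for something smaller, but that would overcount $\varepsilon$'s: the correct strategy is to avoid any rescaling and keep the estimate anisotropic so that the $|\xi|$ weight is paired with $\nabla_x u$ and $\partial_z$ with $\partial_z u$, producing constants independent of $\varepsilon$. The gap $\varepsilon^{-1/2}$ between the two bounds is then exactly the deficit coming from the non-vanishing trace on $z=0$ encoded in the mean value step.
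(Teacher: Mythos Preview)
Your proof is correct and follows essentially the same approach as the paper, which merely indicates that the result ``is easily proved with Fourier series, that reduces the problem to a $1d$ problem.'' Your argument carries out precisely this reduction: Fourier transform in the horizontal variables, then the elementary one-dimensional identity $|\hat u(\xi,\varepsilon)|^2 = |\hat u(\xi,z_0)|^2 + 2\,\mathrm{Re}\int_{z_0}^\varepsilon \hat u\,\overline{\partial_z \hat u}$, with $z_0=0$ in the first case and $z_0$ chosen by the mean-value argument in the second, correctly isolating the $\varepsilon^{-1}$ contribution.
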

\medskip
This result is easily proved with Fourier series, that reduces the problem to a $1d$ problem. The trace of a product is estimated using the following result.
\begin{lemma}\label{trac2}
Let $u\in H^{-\frac{1}{2}}$ and $v\in H^2$ then, one has 
$$
\displaystyle
|uv|_{-\frac{1}{2}}\leq \frac{C}{\sqrt{\varepsilon}}|u|_{-\frac{1}{2}}\|v\|_{2}.
$$
\end{lemma}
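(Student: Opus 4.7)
The plan is to interpret $uv$ as the pointwise product of $u \in H^{-1/2}(\mathbb{X})$ with the boundary trace of $v$ on $\mathbb{X}$ (at $z=0$ or $z=\varepsilon$), reduce the estimate to a duality bound on $\mathbb{X}$, and combine a standard Sobolev multiplier estimate with the anisotropic trace inequality of Lemma~\ref{trac1}. The $\varepsilon^{-1/2}$ factor is expected to come entirely from the thinness of $\Omega$, entering only through the trace step; the multiplier constant on the fixed base $\mathbb{X}$ is independent of $\varepsilon$.

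First I would dualize, writing
$$|uv|_{-1/2} = \sup_{\phi \in H^{1/2}(\mathbb{X}),\ |\phi|_{1/2} \le 1} \langle u, v\,\phi \rangle_{H^{-1/2},\, H^{1/2}} \le |u|_{-1/2} \sup_{|\phi|_{1/2}\le 1} |v\,\phi|_{1/2}.$$
The problem is therefore reduced to bounding the multiplier norm of $v$ (viewed as a function on $\mathbb{X}$) from $H^{1/2}(\mathbb{X})$ into itself. Since $n \le 2$ one has $3/2 > n/2$, so $H^{3/2}(\mathbb{X})$ embeds continuously into $L^\infty(\mathbb{X})$ and is a pointwise multiplier on $H^{1/2}(\mathbb{X})$:
$$|v\,\phi|_{1/2} \le C\,|v|_{3/2}\,|\phi|_{1/2},$$
with $C$ depending only on $\mathbb{X}$. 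This is the usual Kato--Ponce / paraproduct estimate and is in fact the fractional analogue of the first inequality in Lemma~\ref{int2} with $s=1/2$, $t=3/2$; the constant has nothing to do with $\varepsilon$.

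Next I would insert the anisotropic trace estimate. Applying Lemma~\ref{trac1} both to $v$ and to each component of $\nabla v$ (which lies in $H^1(\Omega)$ since $v \in H^2(\Omega)$) yields
$$|v|_{1/2} \le \tfrac{C}{\sqrt{\varepsilon}}\,\|v\|_1, \qquad |\nabla v|_{1/2} \le \tfrac{C}{\sqrt{\varepsilon}}\,\|\nabla v\|_1 \le \tfrac{C}{\sqrt{\varepsilon}}\,\|v\|_2,$$
hence $|v|_{3/2} \le \frac{C}{\sqrt{\varepsilon}}\,\|v\|_2$. Chaining the three inequalities delivers the claim.

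The only nontrivial ingredient is the fractional multiplier bound in the middle step; the rest is bookkeeping. The cleanest justification is a Littlewood--Paley / paraproduct decomposition $v\phi = T_v \phi + T_\phi v + R(v,\phi)$ and estimating $T_v\phi$ by $\|v\|_{L^\infty}\,|\phi|_{1/2}$ (using $H^{3/2}\hookrightarrow L^\infty$) and the two remaining pieces by a direct frequency sum, all with $\mathbb{X}$-constants. Once this is granted, the anisotropic tracking of $\varepsilon$ is automatic because Lemma~\ref{trac1} is the only place where the thinness of $\Omega$ enters.
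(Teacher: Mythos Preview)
Your proof is correct and follows essentially the same scheme the paper indicates: the duality/multiplier argument on $\mathbb{X}$ is precisely the fixed-domain estimate borrowed from \cite{CoSh}, and the $\varepsilon^{-1/2}$ appears only through an anisotropic inequality on the thin domain. The one (harmless) difference is that you extract this factor from Lemma~\ref{trac1} applied to $v$ and $\nabla_x v$ to get $|v|_{3/2}\le C\varepsilon^{-1/2}\|v\|_2$, whereas the paper points to Proposition~\ref{int1}, i.e.\ the embedding $\|v\|_{L^\infty(\Omega)}\le C\varepsilon^{-1/2}\|v\|_2$, to control the low-frequency paraproduct piece; both routes yield the same constant and the argument is otherwise identical.
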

The proof of this lemma is found in \cite{CoSh} in a fixed domain: we combine this proof and proposition \ref{int1} to obtain the estimate of lemma \ref{trac2}.

\subsubsection{Korn inequality in thin domain}

In what follows, we prove the following result.
\begin{proposition}\label{prop_korn}
There is a constant $C>0$ independent of $\varepsilon$ such that
$$
\displaystyle
2\,\|D(u)\|^2_{L^2(\Omega)}+\varepsilon\bar{\gamma}|u_{H}|_{z=0}|_{L^2(\mathbb{X})}^2\geq C\|u\|_{H^1(\Omega)}^2,
$$
\noindent
for all $u\in H^1(\Omega)$ so that ${\rm div} u=0$ and $u_V|_{z=0}=0$.
\end{proposition}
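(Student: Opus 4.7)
The plan is to combine an exact Korn-type identity for divergence-free fields with anisotropic Poincaré inequalities adapted to the thin direction, using the bottom-boundary penalty to capture the scalar kernel mode of constant horizontal velocities. The starting point is the identity
\begin{equation*}
\int_\Omega|D(u)|^2=2\|\nabla_x u_H\|_{L^2(\Omega)}^2+6\|\partial_z u_V\|_{L^2(\Omega)}^2+2\|\partial_z u_H+\nabla_x u_V\|_{L^2(\Omega)}^2,
\end{equation*}
obtained by expanding $|D(u)|^2=|D_x(u_H)|^2+2|\partial_z u_H+\nabla_x u_V|^2+4(\partial_z u_V)^2$, integrating by parts on the horizontal cross-term $\partial_i u_{H,j}\partial_j u_{H,i}$ using periodicity of $\mathbb{X}$, and substituting $\mathrm{div}_x u_H=-\partial_z u_V$. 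From this I extract at once $\|\nabla_x u_H\|^2$, $\|\partial_z u_V\|^2$, and $\|\partial_z u_H+\nabla_x u_V\|^2$ each bounded by $\tfrac{1}{2}\|D(u)\|^2$.

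For the $L^2$ part of $\|u\|_{H^1}^2$, I would decompose $u_H=\bar u_H(z)+\tilde u_H(x,z)$ with $\tilde u_H$ of zero horizontal mean; integrating the divergence-free condition over $x$ together with $u_V|_{z=0}=0$ forces $\int_\mathbb{X}u_V\,dx=0$ at every $z$. Slicewise Poincaré for mean-zero periodic functions combined with the bound on $\|\nabla_x u_H\|^2$ yields $\|\tilde u_H\|^2\leq C\|D(u)\|^2$, and vertical Poincaré on $u_V$ (using $u_V|_{z=0}=0$) gives $\|u_V\|^2\leq C\varepsilon^2\|D(u)\|^2$. For the vertical mean $\bar u_H$, using $\bar u_H(z)=\bar u_H(0)+\int_0^z\partial_z\bar u_H$ and $|\bar u_H(0)|^2|\mathbb{X}|\leq|u_H|_{z=0}|_{L^2(\mathbb{X})}^2$, one obtains
\[
\|\bar u_H\|_{L^2(\Omega)}^2\leq 2\varepsilon|u_H|_{z=0}|_{L^2(\mathbb{X})}^2+2\varepsilon^2\|\partial_z u_H\|_{L^2(\Omega)}^2,
\]
whose first term is absorbed by the boundary penalty up to a factor $\bar\gamma^{-1}$.

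For the gradient part the identity only controls $\|\partial_z u_H+\nabla_x u_V\|^2$, so one must separate $\|\partial_z u_H\|^2$ and $\|\nabla_x u_V\|^2$. Polarization combined with integration by parts in $z$ and $\partial_z u_V=-\mathrm{div}_x u_H$ gives
\begin{equation*}
\|\partial_z u_H\|^2+\|\nabla_x u_V\|^2=\|\partial_z u_H+\nabla_x u_V\|^2+2\|\partial_z u_V\|^2-2\int_\mathbb{X}(u_V\partial_z u_V)\big|_{z=\varepsilon}dx,
\end{equation*}
reducing the task to bounding the free-surface trace at $z=\varepsilon$. Rewriting this trace as $\int_\mathbb{X}(\tilde u_H\cdot\nabla_x u_V)|_{z=\varepsilon}$ (integration by parts in $x$ plus the vanishing horizontal mean of $u_V$), I would apply an anisotropic trace inequality $\|f|_{z=\varepsilon}\|_{L^2(\mathbb{X})}^2\leq\tfrac{2}{\varepsilon}\|f\|_{L^2(\Omega)}^2+\varepsilon\|\partial_z f\|_{L^2(\Omega)}^2$ to $\tilde u_H$, together with the vanishing $\nabla_x u_V|_{z=0}=0$ and a Young inequality with small parameter, to absorb the residual $\|\partial_z u_H\|^2$ and $\|\nabla_x u_V\|^2$ into the left-hand side. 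Adding all the contributions then yields the desired inequality with $C$ independent of $\varepsilon$.

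The main obstacle is precisely this final step: the free-surface trace resists naive estimation, since $\partial_z u_V|_{z=\varepsilon}$ is not controlled in $L^2(\mathbb{X})$ by $\|D(u)\|_{L^2(\Omega)}$ alone. The absorption argument works only because of the zero horizontal mean of $u_V$ (which eliminates the kernel contribution of $\bar u_H$ in the trace), the vanishing of $\nabla_x u_V$ at the bottom, and a careful balancing of the $\varepsilon$-factors between the trace inequality and the Poincaré inequalities in the thin direction.
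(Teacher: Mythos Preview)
Your identity $\int_\Omega|D(u)|^2=2\|\nabla_x u_H\|^2+6\|\partial_z u_V\|^2+2\|\partial_z u_H+\nabla_x u_V\|^2$ and the polarization formula
\[
\|\partial_z u_H\|^2+\|\nabla_x u_V\|^2=\|\partial_z u_H+\nabla_x u_V\|^2+2\|\partial_z u_V\|^2-2\int_{\mathbb{X}}(u_V\partial_z u_V)\big|_{z=\varepsilon}
\]
are both correct, and the treatment of the $L^2$ part via the decomposition $u_H=\bar u_H+\tilde u_H$ is fine. The gap is exactly where you flag it, and it is not merely a technicality that ``careful balancing of $\varepsilon$-factors'' will fix. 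Any attempt to control the free-surface trace $T=\int_{\mathbb{X}}(\tilde u_H\cdot\nabla_x u_V)|_{z=\varepsilon}$ by Cauchy--Schwarz plus a trace inequality forces you either to produce a second derivative (e.g.\ $\|\partial_z\nabla_x u_V\|$, unavailable in $H^1$) or to recover the cross term $\int_\Omega\partial_z u_H\cdot\nabla_x u_V$ with coefficient exactly $1$. Concretely, writing $T=\int_0^\varepsilon\partial_z\int_{\mathbb{X}}\tilde u_H\cdot\nabla_x u_V$ (using $\nabla_x u_V|_{z=0}=0$) and integrating by parts in $x$ returns $T=\int_\Omega\partial_z u_H\cdot\nabla_x u_V+\|\partial_z u_V\|^2$, which is precisely the identity $T=(T-\|\partial_z u_V\|^2)+\|\partial_z u_V\|^2$; no absorption is gained. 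Likewise, bounding $|T|\le C\|\partial_z u_V\|^2+\|\nabla_x u_V\|\,\|\partial_z u_H\|$ via the averaged trace inequality yields a factor $1$ (not $<1$) in front of $\|\partial_z u_H\|^2+\|\nabla_x u_V\|^2$, so the loop does not close.

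The paper proceeds quite differently and avoids this obstruction entirely. After the same Poincar\'e/trace reductions for the $L^2$ part, it proves the refined inequality $2\|D(u)\|^2\ge C\|\nabla u\|^2$ by Fourier decomposition in the horizontal variables. For each mode $k$ the problem becomes an ODE eigenvalue problem on $(0,\varepsilon)$; the minimizers lie in an explicit six-dimensional space, and the minimal ratio $\Lambda(k)$ solves $\det(q_2(k)-Xq_1(k))=0$ for two $6\times6$ matrices. One then shows $X=1$ is a quadruple root, $X=2$ is the maximal root (attained by potential flows), and analyzes the remaining root $\Lambda(k)$ asymptotically as $M=\varepsilon|k|\to 0$ and $M\to\infty$ to obtain a uniform lower bound. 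This mode-by-mode spectral analysis is what replaces the free-surface trace estimate you are missing; your direct approach, as written, does not close.
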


\begin{proof}
This is a consequence of Poincar\'e inequality and trace estimates that the $L^2$ norm of $u_H$ and $u_V$ are estimated as
$$
\displaystyle
\|u_V\|_{L^2(\Omega)}\leq C\varepsilon\|\partial_z u_V\|_{L^2(\Omega)},\quad \|u_H\|_{L^2(\Omega)}\leq C\big(\varepsilon|u_H|_{z=0}|_{L^2(\mathbb{X})}+\varepsilon\|\partial_z u_H\|_{L^2(\Omega)}\big).
$$
We will prove a refined inequality on the deformation tensor when $\mathbb{X}=\mathbb{T}^n$ 
$$
\displaystyle 2\,\|D(u)\|^2_{L^2(\Omega)}\geq C\|\nabla u\|_{L^2(\Omega)}^2. 
$$
For that purpose, we decompose the functions into Fourier series in the horizontal variables, $(u_H,u_V)=\sum_{k\in\mathbb{Z}^2}(u_k(z),w_k(z))\exp(ik.x)$. Note that a similar result is obtained when $\mathbb{X}=\mathbb{R}^n$, using Fourier transform. We find, using the divergence free condition 
{\setlength\arraycolsep{1pt}
\begin{eqnarray*}
\displaystyle
2\,\|D(u)\|^2_{L^2(\Omega)}&=&2\sum_{k\in\mathbb{Z}^2}\int_0^{\varepsilon}k_1^2|u_{k,1}|^2+k_2^2|u_{k,2}|^2+|k.u_k|^2\nonumber\\
\displaystyle
+\sum_{k\in\mathbb{Z}^2}\int_0^{\varepsilon}|k_2u_{k,1}+k_1u_{k,2}|^2&+&|u_{k,1}'+\int_0^z\,k_1k.u_k|^2+|u_{k,2}'+k_2\int_0^z\,k.u_k|,\nonumber\\
\displaystyle
\|\nabla u\|_{L^2(\Omega)}^2&=&\sum_{k\in\mathbb{Z}^2}\int_0^{\varepsilon}|k|^2|u_k|^2+|k.u_k|^2+|\int_0^z\,k.u_k|^2+|u_k'|^2.
\end{eqnarray*}}
where $u'(z)=\frac{du}{dz}$. Setting $U_k=\int_0^z u_k$, there exists $C_k\in(0, 2)$ so that
{\setlength\arraycolsep{1pt}
\begin{eqnarray}
\displaystyle
\int_0^{\varepsilon}2k_1^2|U'_{1}|^2&+&2k_2^2|U'_{2}|^2+2|k.U'|+|k_2U'_{1}+k_1U'_{2}|^2+|U_{1}''+k_1k.U|^2\nonumber\\
\displaystyle
+\int_0^{\varepsilon}|U_{2}''&+&k_2\,k.U|^2\geq C_k\int_0^{\varepsilon}|k|^2|U'|^2+|k.U'|^2+|k.U|^2+|U''|^2,
\end{eqnarray}}
\noindent
for all $U\in H^1(0, \varepsilon)^2$ so that $U|_{z=0}=0$. We prove the inequality for each modes. We must prove that there is $C>0$ independent of $k$ so that $C_k\geq C$, forall $k\in\mathbb{Z}^2$. If $k=0$, the two quantities are equal and one can choose $C_0=1$. In what follows, we assume that $k\neq 0$. Denote $C_k\in(0,2)$ the optimal value for the mode $k$. Using test function in $\mathcal{D}(0, \varepsilon)$, one can prove that, if $C_k\neq 1$, an optimal solution necessarily satisfies the differential system
\begin{equation}\label{edo_min}
\displaystyle
D^2 U_1+k_2^2DU_1=k_1k_2 DU_2,\quad D^2U_2+k_1^2DU_2=k_1k_2 DU_1,\quad \forall z\in(0, \varepsilon).
\end{equation}
\noindent
where $D$ is the differential operator $D=\frac{d^2}{dz^2}-|k|^2Id$. This space of solutions is $8$-dimensional and reduced to a $6$-dimensional one, using the condition $U|_{z=0}=0$. Moreover, the eigenfunction $U$ can be written $U(z)=\tilde{U}(|k|z)$, where $\tilde{U}\in\mathcal{S}$, the vector space defined as
{\setlength\arraycolsep{1pt}
\begin{eqnarray}
\displaystyle
\tilde{U}(z)&=&\left(\begin{array}{c} -c\\s\end{array}\right)\big(a_1z+b_1\sinh(z)+c_1(\cosh(z)-1)\big)\nonumber\\
\displaystyle
&&+\left(\begin{array}{c} s\\c\end{array}\right)\big(a_2\sinh(z)+b_2z\sinh(z)+c_2z\cosh(z)\big).
\end{eqnarray}}
where we have denoted $k$ as $k=|k|(c,s)$ and $a_i,b_i,c_i\in\mathbb{R}, i=1,2$. Then we have to show that there is $C$ independent of $k,\varepsilon$ so that $Q_2(\tilde{U},k)\geq C\,Q_1(\tilde{U},k)$, for all $\tilde{U}\in\mathcal{S}$ and $Q_i$ the quadratic forms defined as
{\setlength\arraycolsep{1pt}
\begin{eqnarray*}
\displaystyle
Q_2(U,k)&=&\int_0^M 2c^2(U_1')^2+2s^2(U_2')^2+2(cU_1'+sU_2')^2\\
\displaystyle
&+&\int_0^M(sU_1'+cU_2')^2+\big(U_1''+c(cU_1+sU_2)\big)^2+\big(U_2''+s(cU_1+sU_2)\big)^2,\\
\displaystyle
Q_1(U,k)&=&\int_0^M |U'|^2+(cU_1'+sU_2')^2+|U''|^2+(cU_1+sU_2)^2,
\end{eqnarray*}}
with $M=\varepsilon|k|$. Let us denote $q_i(k), i=1,2$ the symmetric matrix associated to the quadratic forms $Q_i(.,k), i=1,2$. If $C_k$ is the minimum value for which the inequality  $Q_2(\tilde{U},k)\geq C_k\,Q_1(\tilde{U},k)$ for all $\tilde{U}$ is satisfied, then $\det(q_2(k)-C_k q_1(k))=0$ and one is left with an eigenvalue problem. Let us study the roots of $P(X)=\det(q_2(k)-X\,q_1(k))$. First, an integration by parts in the quadratic form $Q_2(.,k)$ yields $Q_2(U,k)=\tilde{Q}_2(U,k)+Q_1(U,k)$
$$
\displaystyle
\tilde{Q}_2(U,k)=2\big(cU_1+sU_2\big)(M)\big(cU_1'+sU_2'\big)(M).
$$
The quadratic form $\tilde{Q}_2(.,k)$ is a product of two linear forms $\tilde{Q}_2(U,k)=(v_1(k),a)(v_2(k),a)$ with $(v_1(k),v_2(k))\in\mathbb{R}^6$ and $a\in\mathbb{R}^6$ represents the coordinates of $\widetilde U$ in the basis of $\mathcal{S}$. Then the matrix $\tilde{q}_2(k)$ associated to $\tilde{Q}_2(.,k)$ is given by $\tilde{q}_2(k)a=(v_1(k),a)v_2(k)+(v_2(k),a)v_1(k)$. As a consequence, for any $a\in (v_1(k),v_2(k))^{\bot}$, one has $q_2(k)a=q_1(k)a$. One can prove that $v_1(k),v_2(k)$ are not colinear so that $X=1$ is a root of $P$ with multiplicity $4$. There remains two extremal values which correspond to minimal and maximal value of $C_k$. We know that $C_k\in(0, 2)$: let us show that $X=2$ is exactly the maximum value for $C_k$. It is sufficient to show that there is a divergence free vector fields $(u,w)$ so that $J(u,w)=J(u,w)^T$ and $w(0)=0$, $J(u,w)$ being the Jacobian matrix of $u,w$. 
This is done choosing a potential flow $(u,w)=(\nabla \psi,\partial_z\psi)$ so that $\psi_z(0)=0$ and $\Delta\psi=0$. Split the functions into their Fourier modes, one shows that it suffices to choose $\psi_k(z)=\alpha_k\cosh(|k|z)$ with $\alpha\in l^2(\mathbb{Z}^2)$ chosen so that the $H^1$ regularity of  $u,w$ is satisfied. As a result, the polynom $P$ is factorized in the form $$P(X)=\Gamma_k(X-1)^4(X-2)(X-\Lambda(k))$$ and there is a unique minimal value $\Lambda(k)$ which depends continuously of  $M=\varepsilon|k|$ and $\sigma=c+is\in\mathbb{S}^1$. We analyse $\Lambda(k)$ when $M\sim\varepsilon$ and $M\to\infty$. We have calculated  with a formal computation software the expansion of $P$ with respect to $M$ at $M=0$: 
$$
\displaystyle
P(X)=M^4\frac{8}{9}(Z-1)^5(Z-2)+\mathcal{O}(M^6),
$$
so that $\Lambda(k)\sim 1$ as $\varepsilon\to 0$ and $k$ lies in a fixed compact set. Next we deal with the limit $M\to\infty$: Assume that there is $U_M$ so that $MQ_1(U_M,k)=1$ and $MQ_2(U_M,k)\to 0$. This is equivalent to the existence of $\tilde{U}_M$ so that 
$$
\begin{array}{ll}
\displaystyle
\int_0^1 M^2(c\tilde{U}_{1,M}+s\tilde{U}_{2,M})^2+\frac{|\tilde{U}_M''|^2}{M^2}+|\tilde{U}_{M}'|^2+(c\tilde{U}_{1,M}'+s\tilde{U}_{2,M}')^2=1,\\
\displaystyle
\lim_{M\to\infty}\big(c\tilde{U}_{1,M}(1)+s\tilde{U}_{2,M}(1)\big)\frac{c\tilde{U}_{1,M}'+s\tilde{U}_{2,M}'}{M}=-1.
\end{array}
$$
Denote $f_M=c\tilde{U}_{1,M}+s\tilde{U}_{2,M}$: it is easily proved that $f_M(1)$ and $f'_M(1)$ are estimated as
$$
\begin{array}{ll}
\displaystyle
f_M(1)^2=2\int_0^1f_M(z)f'_M(z)dz\leq\frac{2}{M},\\
\displaystyle
\frac{1}{M^2}f'_M(1)^2\leq\frac{1}{M^2}\int_0^1(f'_M(z))^2dz+\frac{2}{M^2}\int_0^1\int_z^1|f'_M(s)f''_M(s)|dsdz\leq \frac{3}{M}.
\end{array}
$$
This yields the contradiction with $\lim_{M\to\infty}M^{-1}f_M(1)f'_M(1)=-1$ and concludes the proof of the Korn's inequality.
\end{proof}$\Box$\subsection{Well posedness of linearized Navier-Stokes equations}
\subsubsection{A special divergence free linear problem}

In this section, we consider the special linear problem  obtained from (\ref{lin_gen_0}-\ref{bc_s_0}), assuming $h_0=1$, $u_0=0$: the more general case $|h_0-1|\ll 1$ and $|u_0|\ll 1$ will be treated in the next section through a perturbation argument. In the case considered here, $A_0=Id$ and the linear problem is
\begin{equation}\label{ns_lin_0}
\displaystyle
\partial_t\overline{u}+\frac{\nabla\overline{p}}{\rm Re}=\frac{1}{\rm Re}{\rm div}\,D(\overline{u})+f,\quad {\rm div}u=0,
\end{equation}
\noindent
with the boundary conditions at the bottom 
\begin{equation}\label{bc_b}
\displaystyle
\overline{u}_V|_{z_0=0}=0,\quad \partial_{z_0}\overline{u}_H|_{z_0=0}=\varepsilon\overline{\gamma}\,\overline{u}_H|_{z_0=0}+g_1,
\end{equation}
\noindent
and at the free surface 
\begin{equation}\label{bc_f}
\displaystyle
(\partial_{z_0}\overline{u}_H+\nabla_{x_0}\overline{u}_V)|_{z_0=\varepsilon}=g_{2,1},\quad (2\partial_{z_0}\overline{u}_V-\overline{p})|_{z_0=\varepsilon}=g_{2,2}.
\end{equation}
\noindent
We prove the existence of a weak solution. The weak formulation of (\ref{ns_lin_0},\ref{bc_b},\ref{bc_f}) is written as
{\setlength\arraycolsep{1pt}
\begin{eqnarray}\label{weak_f}
\displaystyle
\partial_t\int_{\Omega}(\overline{u}, \phi)&+&\frac{1}{2\,\rm Re}\int_{\Omega}D(\overline{u}):D(\phi)+\frac{\varepsilon\overline{\gamma}}{\rm Re}\int_{\mathbb{X}}(\overline{u}_H|_{z_0=0}, \phi_H|_{z_0=0})=\nonumber\\
\displaystyle
&&\int_\Omega (f, \phi)+\frac{1}{\rm Re}\int_{\mathbb{X}}(g_2, \phi |_{z_0=\varepsilon})-(g_1, \phi_H|_{z_0=0}),
\end{eqnarray}}
\noindent
for all $\phi\in (H^1(\Omega))^3$ so that ${\rm div}\phi=0$ and $\phi_V|_{z_0=0}=0$. 
\noindent
In what follows, we prove the existence (and uniqueness) of a solution of (\ref{weak_f}).\\

\noindent
\begin{proposition}
Let $f\in L^2\big((0,\,T),(H^1(\Omega)')\big)$ and $g\in L^2\big((0,\,T), H^{-\frac{1}{2}}(\mathbb{T})\big)$. For any $u_0\in L^2_{div}(\Omega)$, there is a unique weak solution $u\in L^2\big((0,\, T), H^1(\Omega)\big)$  of (\ref{weak_f}) and $\partial_t u\in L^2\big((0,\, T), (H^1(\Omega))'\big)$. Moreover, it satisfies the energy estimates
{\setlength\arraycolsep{1pt}
\begin{eqnarray}
\displaystyle
{\rm max}_{(0,\,t)}\,\|u\|_0^2+C\,\int_0^t\,\|u\|_1^2&\leq& \|u_0\|_0^2+\nonumber\\
\label{est_en_0}
\displaystyle
C\int_0^t\|f\|^2_{(H^1(\Omega))'}&+&|g_{2,2}|^2_{-\frac{1}{2}}+\frac{1}{\varepsilon}\big(|g_{2,1}|_{-\frac{1}{2}}^2+|g_1|_{-\frac{1}{2}}^2\big).
\end{eqnarray}}
\end{proposition}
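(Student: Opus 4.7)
The plan is to construct the weak solution by a standard Galerkin approximation in the Hilbert space $V=\{\phi\in H^1(\Omega)^3:\ {\rm div}\,\phi=0,\ \phi_V|_{z_0=0}=0\}$, using a countable orthonormal basis $(\phi_k)_{k\ge 1}$ of $V$. Writing $u^N(t)=\sum_{k=1}^N c_k^N(t)\phi_k$ and projecting \eqref{weak_f} onto the span of $\phi_1,\dots,\phi_N$ gives a linear ODE system for $(c_k^N)$ whose local well-posedness is immediate. The whole proof reduces to a uniform-in-$N$ energy estimate; once this is established, weak compactness yields a limit $u\in L^2((0,T),V)\cap L^\infty((0,T),L^2_{\rm div})$ satisfying \eqref{weak_f}, and uniqueness follows from linearity applied to the difference of two solutions.

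To produce the energy estimate, I would test \eqref{weak_f} with $\phi=u^N$. The time-derivative term yields $\tfrac12\tfrac{d}{dt}\|u^N\|_0^2$, and the coercive contribution
$$\frac{1}{2\,{\rm Re}}\|D(u^N)\|_0^2+\frac{\varepsilon\bar\gamma}{\rm Re}\,|u_H^N|_{z_0=0}|_{L^2(\mathbb{X})}^2$$
is bounded below by $\tfrac{C}{\rm Re}\|u^N\|_1^2$ thanks to Proposition~\ref{prop_korn} (Korn's inequality in a thin domain), with $C$ independent of $\varepsilon$. The forcing $\int f\cdot u^N$ is controlled by $\|f\|_{(H^1)'}\|u^N\|_1$ and then by Young's inequality.

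The key place where the $\varepsilon$-scaling of the estimate is set is the treatment of the boundary terms. For the normal-stress contribution $\int_\mathbb{X} g_{2,2}\,u_V^N|_{z_0=\varepsilon}$, I use the fact that $u_V^N$ vanishes at $z_0=0$, so Lemma~\ref{trac1} gives $|u_V^N|_{z_0=\varepsilon}|_{1/2}\le C\|u^N\|_1$ with $C$ independent of $\varepsilon$; this yields a term $|g_{2,2}|_{-1/2}^2$ without a factor of $1/\varepsilon$. For $\int g_{2,1}\cdot u_H^N|_{z_0=\varepsilon}$ and $\int g_1\cdot u_H^N|_{z_0=0}$, the horizontal component $u_H^N$ has no vanishing trace, so Lemma~\ref{trac1} only gives $|u_H^N|_{1/2}\le C\varepsilon^{-1/2}\|u^N\|_1$, producing the $\varepsilon^{-1}$ weights in front of $|g_{2,1}|_{-1/2}^2$ and $|g_1|_{-1/2}^2$ after Young's inequality. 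Absorbing the small multiples of $\|u^N\|_1^2$ into the coercive term and integrating in time gives \eqref{est_en_0} uniformly in $N$.

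Passing to the limit $N\to\infty$ in the standard way delivers the existence of a weak solution with the announced estimate. The bound $\partial_t u\in L^2((0,T),(H^1(\Omega))')$ is then read directly from the weak formulation \eqref{weak_f}, bounding each term on the right-hand side by Cauchy--Schwarz, the trace lemmas, and the estimate already established for $u$; the pressure is recovered a posteriori by De Rham's theorem applied to the distribution $\partial_t u-{\rm div}\,D(u)/{\rm Re}-f$ which, tested against divergence-free fields in $V$, vanishes. I expect the only subtle point to be bookkeeping the $\varepsilon$-dependence of the trace constants, and in particular noting that the vanishing bottom trace of $u_V$ (coming from the boundary condition) is what saves a factor $\varepsilon^{-1}$ in the $g_{2,2}$ term; everything else is a textbook Galerkin-plus-Korn argument.
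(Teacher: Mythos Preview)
Your proposal is correct and follows essentially the same approach as the paper: Galerkin approximation, test with $\phi=u$, apply the thin-domain Korn inequality (Proposition~\ref{prop_korn}) for coercivity, handle the boundary terms via the trace estimates of Lemma~\ref{trac1} (with the crucial observation that $u_V|_{z_0=0}=0$ removes the $\varepsilon^{-1}$ loss for the $g_{2,2}$ term), absorb via Young, integrate in time, and read off $\partial_t u\in L^2((0,T),(H^1)')$ from the weak formulation. The pressure recovery you mention via De Rham is exactly how the paper proceeds as well, though in the paper this is stated just after the proposition rather than inside its proof.
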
 
\noindent
\begin{proof}
The proof follows from a classical Galerkin approximation process: we only prove here the energy estimate. Substituting $\phi=u$ in the weak formulation (\ref{weak_f}), one obtains
{\setlength\arraycolsep{1pt}
\begin{eqnarray}
\displaystyle
\frac{\partial \|u\|_0^2}{\partial t}+\frac{1}{2}\int_{\Omega}D(u)&:&D(u)+\varepsilon\overline{\gamma}\int_{\mathbb{T}}|u_H|_{z=0}|_0^2=\nonumber\\
\label{en_ineq}
\displaystyle
\big(f, u\big)_{(H^1)',H^1}&+&(g_2,u|_{z=\varepsilon})_{H^{-\frac{1}{2}},H^{\frac{1}{2}}}-(g_1,u_H|_{z=0})_{H^{-\frac{1}{2}},H^{\frac{1}{2}}}.
\end{eqnarray}}
\noindent
Using the Korn's inequality on the functional space $\big(H_{div}^1(\Omega)\big)^3$ so that $u_V|_{z=0}=0$, one proves that there exists a constant $C>0$ independent of $\varepsilon$ so that
$$
\displaystyle 
\|u\|_1^2\leq C\Big(\int_{\Omega}D(u):D(u)+\varepsilon\overline{\gamma}\int_{\mathbb{T}}|u_H|_{z=0}|^2\Big).
$$
\noindent
Moreover, using lemma \ref{trac1}, one has the trace estimates
$$
\displaystyle
|u_V|_{z=\varepsilon}|_{\frac{1}{2}}\leq C\|u_V\|_1,\quad \sqrt{\varepsilon}|u_H|_{\frac{1}{2}}\leq C\|u_H\|_1.
$$
\noindent
Using a Young's inequality, one can prove that for any $\eta>0$, there exists $c(\eta)>0$ so that
$$
\begin{array}{lll}
\displaystyle
|(g_{2,1},u_H|_{z=\varepsilon})-(g_1,u_H|_{z=0})_{H^{-\frac{1}{2}},H^{\frac{1}{2}}}|\leq \frac{c(\eta)}{\varepsilon}\big(|g_{2,1}|^2_{-\frac{1}{2}}+|g_1|_{-\frac{1}{2}}^2\big)+\eta\|u_H\|_1^2,\\
\displaystyle
|(g_{2,2},u_V)|_{H^{-\frac{1}{2}},H^{\frac{1}{2}}}|\leq c(\eta)|g_{2,2}|_{-\frac{1}{2}}^2+\eta\|u_V\|_1^2\\
\noindent
|(f, u)|_{(H^1)',H^1}\leq c(\eta)\|f\|_{(H^1)'}^2+\eta\|u\|_1^2.
\end{array}
$$
\noindent
Inserting this inequality into (\ref{en_ineq}) gives the estimate
$$
\displaystyle
\frac{\partial}{\partial t}\|u\|_0^2+C\|u\|_1^2\leq C\Big(\|f\|^2_{(H^1)'}+|g_{2,1}|^2_{-\frac{1}{2}}+\frac{1}{\varepsilon}\big(|g_{2,1}|^2+|g_1|^2\big)\Big).
$$
\noindent
Then integrating this equation on the interval $(0,\,t)$ yields  (\ref{est_en_0}). From the weak formulation, one easily proves
$$
\displaystyle
\int_0^t\|\partial_tu\|^2_{(H^1)'}\leq \|u_0\|_2^2+\int_0^t \|f\|^2_{(H^1)'}+|g_{2,1}|^2_{-\frac{1}{2}}+\frac{1}{\varepsilon}\big(|g_{2,1}|_{-\frac{1}{2}}^2+|g_1|_{-\frac{1}{2}}^2\big)
$$ 
\noindent
Using an interpolation argument, one has $u\in C\big((0,\ t),L^2_{div}\big)$ and the uniqueness follows, letting $f=g_i=0$ and $u_0=0$ into  (\ref{est_en_0}). This completes the proof of the proposition.$\Box$
\end{proof}\\

This is a standard argument that there exists a pressure $p\in L^2((0,\,t), L^2)$ so that $\displaystyle\int_{\Omega} p(.,t)=0$  and
{\setlength\arraycolsep{1pt}
\begin{eqnarray}
\displaystyle
\int_{\Omega} (u_t, \phi)+D(u):D(\phi)&-&\int_{\Omega}p\,{\rm div}\,\phi+\varepsilon\overline{\gamma}\int_{\mathbb{X}}u_H|_{z=0}\,\phi_H|_{z=0}\nonumber\\
\displaystyle
=\int_{\Omega}(f, \phi)+\int_{\mathbb{X}}(g_2, \phi|_{z=\varepsilon})&-&(g_1, \phi_H|_{z=0})\quad\forall\phi\in H^1,\:\phi_V|_{z=0}=0.
\end{eqnarray}}
\noindent
Moreover, the pressure term $p$ satisfies 
$$
\displaystyle
\int_0^t\|p\|^2_{0}\leq C\Big(\|u_0\|_0+\int_0^t\|f\|^2_{(H^1)'}+|g_{2,2}|_{-\frac{1}{2}}^2+\frac{1}{\varepsilon}\big(|g_{2,1}|_{-\frac{1}{2}}^2+|g_1|_{-\frac{1}{2}}^2\big)\Big),
$$
\noindent
where $C$ is a constant independent of $\varepsilon$. Next, we prove a result on the regularity of the weak solution.

\begin{proposition}\label{ex_div0}
Let $f\in L^2\big((0,\,T), H^1\big)$ such that $\partial_t f\in L^2\big((0,\,T), (H^1)'\big)$ and $g_i\in L^2\big((0,\,T), H^{\frac{3}{2}}\big)$, $\partial_t g_i\in L^2\big((0,\,T), H^{-\frac{1}{2}}\big)$ which satisfies the compatibility conditions
$$
\begin{array}{ll}
\displaystyle
(\partial_{z_0} u_H+\nabla_{x_0} u_V)_{z_0=\varepsilon}(t=0,.)=g_{2,1}(0,.),\\
\displaystyle
\big(\partial_{z_0} u_H|_{z_0=0}-\varepsilon\overline{\gamma}u_H|_{z_0=0}\big)(t=0,.)=g_1(0,.).
\end{array}
$$
\noindent
and ${\rm div}\,u_0=0$, $u_0\in H^2$. Then the weak solution $u$ verifies 
{\setlength\arraycolsep{1pt}
\begin{eqnarray*}
\displaystyle
{\rm max}_{(0,\,t)}\big(\|u\|_2^2+\|u_t\|^2_0\big)+\int_0^t\|u(s,.)\|_3^2&+&\|u_t(s,.)\|_1^2ds\leq \nonumber\\
\displaystyle
\|u_0\|_2^2&+&\|f(0,.)\|_0^2+\|g_i(0,.)\|_{\frac{1}{2}}^2\nonumber\\
\displaystyle
+\int_0^t\|f\|_{1}^2+\|f_{t}\|_{(H^1)'}^2+|g_{2,2}|^2_{\frac{3}{2}}&+&|g_{2,2,t}|^2_{-\frac{1}{2}}+\frac{1}{\varepsilon}\big(|g_{i,1}|^2_{\frac{3}{2}}+|g_{i,1,t}|^2_{-\frac{1}{2}}\big)
\end{eqnarray*}} 
\end{proposition}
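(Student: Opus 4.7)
The plan is to upgrade the weak-solution estimate of the previous proposition by two bootstrap steps: first a time-differentiation to control $u_t$, then a stationary Stokes argument at each fixed time to recover spatial regularity. I would proceed as follows.

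First, differentiate the weak formulation (\ref{weak_f}) in time. The pair $(u_t,p_t)$ solves an analogous linear problem with forcing $f_t$ and boundary data $g_{i,t}$, and initial value $u_t(0)$. Applying the previous energy estimate (\ref{est_en_0}) to this differentiated system yields
$$
\max_{(0,t)}\|u_t\|_0^2+C\int_0^t\|u_t\|_1^2\leq \|u_t(0)\|_0^2+\int_0^t\|f_t\|^2_{(H^1)'}+|g_{2,2,t}|^2_{-\frac12}+\frac1\varepsilon\bigl(|g_{2,1,t}|^2_{-\frac12}+|g_{1,t}|^2_{-\frac12}\bigr).
$$
The initial datum $u_t(0)$ is read off from (\ref{ns_lin_0}) at $t=0$, namely $u_t(0)=\tfrac{1}{\rm Re}{\rm div}\,D(u_0)-\tfrac{1}{\rm Re}\nabla p(0)+f(0)$, which is well-defined in $L^2$ once $p(0)$ is controlled by an $H^1$ norm obtained from a stationary Stokes problem with data $(u_0,f(0),g_i(0))$. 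The compatibility conditions in the hypothesis are precisely what is needed so that the Neumann-type boundary data of this stationary problem are attained by $u_0$, yielding $\|u_t(0)\|_0^2\lesssim\|u_0\|_2^2+\|f(0)\|_0^2+\|g_i(0)\|_{1/2}^2$.

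Second, at every fixed time $t$, rewrite (\ref{ns_lin_0}) as a stationary Stokes system
$$
-\tfrac{1}{\rm Re}{\rm div}\,D(u)+\tfrac{1}{\rm Re}\nabla p=f-u_t,\qquad {\rm div}\,u=0,
$$
with boundary conditions (\ref{bc_b})--(\ref{bc_f}). Elliptic regularity for this Stokes problem in the flat strip $\Omega$, combined with the trace lemmas \ref{trac1}--\ref{trac2} and Korn's inequality (Proposition~\ref{prop_korn}) to absorb lower order contributions, gives at the $H^2\times H^1$ level
$$
\|u\|_2^2+\|p\|_1^2\lesssim \|f-u_t\|_0^2+|g_{2,2}|^2_{1/2}+\tfrac1\varepsilon\bigl(|g_{2,1}|^2_{1/2}+|g_1|^2_{1/2}\bigr),
$$
and at the $H^3\times H^2$ level
$$
\|u\|_3^2+\|p\|_2^2\lesssim \|f-u_t\|_1^2+|g_{2,2}|^2_{3/2}+\tfrac1\varepsilon\bigl(|g_{2,1}|^2_{3/2}+|g_1|^2_{3/2}\bigr).
$$
Taking the $L^\infty_t$ norm of the first inequality and the $L^2_t$ norm of the second, then inserting the $u_t$ bounds from the first step, produces the stated estimate.

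The main obstacle is controlling the $\varepsilon$-dependence throughout. The factors $1/\varepsilon$ in front of the tangential boundary data are forced by the trace lemma~\ref{trac1}, which loses a $1/\sqrt\varepsilon$ on functions without vanishing trace; squared, this becomes the $1/\varepsilon$ prefactor, matching the statement. Care must also be taken in the Stokes regularity to verify that the implicit constants are independent of $\varepsilon$: since the strip has flat boundaries, the classical Agmon--Douglis--Nirenberg / Solonnikov Stokes regularity applies, and scale-invariance of the estimates is preserved precisely because we have combined them with Poincar\'e, Korn, and the anisotropic Ladyzhenskaia inequality of Proposition~\ref{int1}. The rest is the (standard, but tedious) Galerkin justification of the differentiation in time, which can be handled exactly as in the proof of the preceding proposition.
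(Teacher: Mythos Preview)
Your approach is essentially the same as the paper's: obtain $u_t$ bounds by applying the weak-solution energy estimate to the time-differentiated problem, then recover spatial regularity from the stationary Stokes system with $f-u_t$ as forcing. Two points of comparison are worth mentioning.

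First, the paper is more explicit (and more careful) about the initial pressure. Rather than invoking a ``stationary Stokes problem with data $(u_0,f(0),g_i(0))$'', it writes down the mixed Dirichlet/Neumann Laplace problem that $p_0$ must satisfy, splits it into three pieces $p_{0,1},p_{0,2},p_{0,3}$ (interior source, Dirichlet trace at $z_0=\varepsilon$, Neumann trace at $z_0=0$), and checks each bound by hand with Fourier series to ensure the constants are genuinely $\varepsilon$-independent. Your appeal to ``scale-invariance'' of ADN/Solonnikov regularity is the right intuition, but in a thin strip this is exactly the delicate point, and the paper prefers to verify it explicitly.

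Second, the paper does not formally differentiate the weak formulation. Instead it defines $\widetilde u_0=-\nabla p_0+\Delta u_0+f(0)$, solves the weak problem with data $(f_t,g_{i,t})$ and initial condition $\widetilde u_0$ to obtain $\widetilde u$, then sets $u_1=u_0+\int_0^t\widetilde u$ and checks that $u_1$ solves the original problem; uniqueness gives $u_t=\widetilde u$. This sidesteps the Galerkin justification you allude to at the end. Finally, the paper actually postpones the spatial-regularity step (your ``Second'') to the following proposition; your inclusion of it here is a reasonable consolidation.
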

\begin{proof}
Let us first prove that there exists an ``initial'' pressure $p_0\in H^1(\Omega)$, satisfying the mixed Dirichlet/Neuman problem
$$
\begin{array}{lll}
\displaystyle
-\Delta p_0=-{\rm div}\big(\Delta\,u_0+f(0)\big),\\
\displaystyle
p_0|_{z_0=\varepsilon}=2\partial_{z_0} u_{0,V}|_{z_0=\varepsilon}-g_{2,2}(0),\\ 
\displaystyle
\partial_{z_0} p_0|_{z_0=0}=\varepsilon\overline{\gamma}{\rm div}_{x_0} u_{0,H}|_{z=0}+{\rm div}_{x_0}g_1(0)+f_V(0).
\end{array}
$$
\noindent
This problem is well posed since $f(0)\in L^2$ and $g_i(0)\in H^{\frac{1}{2}}$ with a compactness argument respectively on $f\in L^2(0,\,t;\,H^1)\cap H^1((0,\,t);\,(H^1)')$ and $g\in L^2((0,\,t);\,H^{\frac{3}{2}})\cap H^1(0,\,t;\,H^{-\frac{1}{2}})$. We split this problem into three parts: first, it is easily proved using a Lax Milgram argument that there exists a unique solution $p_{0,1}\in H^1(\Omega)$ so that
$$
\displaystyle
-\Delta p_{0,1}=-{\rm div}\big(\Delta u_0+f(0)\big),\quad p_{0,1}|_{z_0=\varepsilon}=\partial_z p_{0,1}|_{z_0=0}=0.
$$
\noindent
This is a straighforward computation (with Fourier series) to prove that there exists $C$, independent of $\varepsilon$, such that
$$
\displaystyle
\|p_{0,1}\|_1\leq C\big(\|f(0)\|_0+\|u_0\|_2\big).
$$
\noindent
Next we consider the problem
$$
\displaystyle
\Delta p_{0,2}=0,\quad p_{0,2}|_{z=\varepsilon}=h,\quad \partial_{z} p_{0,2}|_{z=0}=0,
$$
\noindent
with $h=2\partial_z u_{0,V}|_{z_0=\varepsilon}-g_{2,2}(0)\in H^{\frac{1}{2}}$. Using Fourier series, this is equivalent to find $p_{0,2}^k$ so that
$$
\displaystyle
-\frac{d^2}{dz^2}p_{0,2}^k+|k|^2 p_{0,2}^k=0,\quad p_{0,2}^k|_{z=\varepsilon}=h_k,\quad \frac{d}{dz}p_{0,2}^k|_{z=0}=0.
$$
\noindent
The solution is easily calculated and satisfies
$$
\displaystyle
p_k(z)=\frac{{\rm cosh}|k|z}{{\rm cosh}(|k|\varepsilon)}h_k,\quad \|\frac{d}{dz}p_{0,2}^k\|^2+|k|^2\|p_{0,2}^k\|\leq C(|k|\varepsilon)|k|h_k^2,
$$
\noindent
with $\displaystyle C(y)=\frac{1}{{\rm cosh}^2y}\int_0^y {\rm cosh}(2x)dx\leq C_1,\forall y\geq 0$ so that $\|p_{0,2}\|_1\leq C_1|h|_{\frac{1}{2}}$.\\ 
\noindent
Similarly, the solution $p_{0,3}$ of 
$$
\displaystyle
\Delta p_{0,3}=0,\:\: p_{0,3}|_{z_0=\varepsilon}=0,\:\: \partial_{z} p_{0,3}|_{z=0}=\varepsilon\overline{\gamma}{\rm div}_{x_0} u_{0,H}|_{z_0=0}+{\rm div}_{x_0} g_1(0)+f_V(0)|_{z_0=0},
$$
\noindent
satisfies  $\displaystyle \|p_{0,3}\|_1\leq C_1\big(\|u_0\|_1+|g_1(0)|_{\frac{1}{2}}+\|f_2(0)\|_0$. Let us define an initial condition $\widetilde u_0\in L^2(\Omega)$ ( that plays the role of $\partial_t u(0)$) as
$$
\displaystyle
\widetilde u_0=-\nabla p_0+\Delta u_0+f(0).
$$
\noindent
This satisfies the following estimate
$$
\|\widetilde u_0\|_0+\|p_0\|_1\leq C_1\Big(\|u_0\|_2+\|f(0)\|_0+|g_1(0),g_2(0)|_{\frac{1}{2}}\Big),
$$
\noindent
with $C_1$ independent of $\varepsilon$. We apply the previous proposition with this initial condition that lies in the space of initial conditions of weak solutions  $\{u\in L^2/\,{\rm div}u=0,\:u_V|_{z_0=0}=0\}$ and $f,g$ are replaced by $(f_t,g_t)$. There exists a unique weak solution $\widetilde u\in L^2(0,\,t;\,H^1(\Omega))$ associated to this problem and that satisfies the energy estimate
{\setlength\arraycolsep{1pt}
\begin{eqnarray}\label{est_u_t}
\displaystyle
{\rm max}_{(0,\,t)}\|\widetilde u\|_0&+&\int_0^t\|\widetilde u\|_1\leq C_1\Big((\|u_0\|^2_2+\|f(0)\|^2_0+|(g_1,g_2)(0)|^2_{\frac{1}{2}}\nonumber\\
\displaystyle
&+&\int_0^t\|f_{t}\|_{(H^1)'}^2+|g_{2,2,t}|_{-\frac{1}{2}}^2+\frac{1}{\varepsilon}\big(|g_{1,t}|_{-\frac{1}{2}}+|g_{2,1,t}|^2_{-\frac{1}{2}}\big).
\end{eqnarray}}
\noindent
Associated to this weak solution is the pressure pressure term $\widetilde p\in L^2$ (which plays the role of $p_t$) that satisfies the estimate $L^2(0,\,t;\,L^2)$
{\setlength\arraycolsep{1pt}
\begin{eqnarray}
\displaystyle
\int_0^t\|\widetilde p\|_{L^2}^2&\leq& C_1\Big((\|u_0\|^2_2+\|f(0)\|^2_0+|g_1(0),g_2(0)|^2_{\frac{1}{2}}\nonumber\\
\displaystyle
&&+\int_0^t\|f_{t}\|_{(H^1)'}^2+|g_{2,2,t}|_{-\frac{1}{2}}^2+\frac{1}{\varepsilon}\big(|g_{1,t}|_{-\frac{1}{2}}+|g_{2,1,t}|^2_{-\frac{1}{2}}\big).
\end{eqnarray}}
\noindent
Then, let us consider $\displaystyle u_1=u_0+\int_0^t\widetilde{u},\quad p_1=p_0+\int_0^t\widetilde{p}.$ Integrating on the time intervals $(0,\,t)$ the weak formulation on $\widetilde{u}$ and using the definition of $p_0$, one can prove that $u_1$ is a weak solution of the initial problem with $u_1(t=0)=u_0$. Then one finds that $u=u_1$, $p=p_1$ and $u_t=\widetilde{u}$, $p_t=\widetilde{p}$ satisfies the estimate (\ref{est_u_t}). This concludes the proof of the proposition.$\Box$
\end{proof}\\

Finally, we check the space regularity of the weak solution. One can prove the proposition
\begin{proposition}\label{prop_reg_lin_0}

Let $f\in L^2(0,\,T;\,H^1)\cap H^1(0,\,T;\,(H^1)')$, $$g_i\in L^2(0,\,T;\,H^{\frac{3}{2}})\cap H^1(0,\,T;\,H^{-\frac{1}{2}})$$ and $u_0\in H^2$ satisfying the compatibility conditions
$$
\displaystyle
\partial_{z_0} u_H+\nabla_{x_0} u_V|_{z_0=\varepsilon}(t=0)=g_{2,1}(0),\quad (\partial_{z_0} u_H-\varepsilon\overline{\gamma}u_H)|_{z_0=0}(t=0)=g_1(0).
$$
\noindent
Then there exists a constant $C$ independent of $\varepsilon$ such that  the weak solution 
$$
\displaystyle
u\in L^2(0,\,T; H^3(\Omega))\cap H^1(0,\,T; H^1(\Omega)),\:p\in L^2(0,\,T; H^2(\Omega))\cap H^1(0,\,T; L^2(\Omega))
$$ 
satisfies
{\setlength\arraycolsep{1pt}
\begin{eqnarray}
\displaystyle
{\rm max}_{(0,\,T)}\|u\|_2^2&+&\int_0^T\|u\|_{3}^2+\|p\|_{2}^2+\|u_t\|_1^2+\|p_t\|^2_0\nonumber\\
\displaystyle
&\leq& C_1\Big(\|u_0\|_2^2+\|f(0)\|_0^2+|(g_1(0),g_2(0))|^2_{\frac{1}{2}}\nonumber\\
\displaystyle
&&+\int_0^T\|f\|_1^2+\|f_{t}\|_{(H^1)'}^2+|g_{2,2}|_{\frac{3}{2}}^2+|g_{2,2,t}|_{-\frac{1}{2}}^2\nonumber\\
\displaystyle
&&+\frac{1}{\varepsilon}\int_0^T\big(|g_1,g_{2,1}|^2_{\frac{3}{2}}+|g_{1,t}, g_{2,1,t}|_{-\frac{1}{2}}^2\big).
\end{eqnarray}}
\end{proposition}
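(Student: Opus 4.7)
The plan is to upgrade the weak solution of Propositions 4--5 to the stated space regularity by viewing (\ref{ns_lin_0})--(\ref{bc_f}) at each fixed time as a stationary Stokes boundary value problem with source $F:=f-\partial_t \overline u$ and boundary data $g_1,g_2$. From Proposition 5 we already have $\partial_t\overline u\in L^2(0,T;H^1)$, $\partial_t\overline p\in L^2(0,T;L^2)$, and $\overline u\in L^\infty(0,T;H^2)$, all with the announced $\varepsilon$-uniform bounds. So the remaining task is, at almost every time $t$, to solve the elliptic system
$$-\tfrac{1}{\mathrm{Re}}\,\mathrm{div}\,D(\overline u)+\tfrac{1}{\mathrm{Re}}\nabla\overline p=F,\qquad \mathrm{div}\,\overline u=0,$$
with the mixed stress/Navier boundary conditions of (\ref{bc_b})--(\ref{bc_f}), and to turn these pointwise-in-$t$ bounds into the integrated estimate claimed in the proposition.

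For the pressure, I would proceed exactly as in the construction of $p_0$ inside the proof of Proposition 5: taking the divergence of the momentum equation yields a Laplace problem for $\overline p$ with Dirichlet data $2\partial_{z_0}\overline u_V|_{z_0=\varepsilon}-g_{2,2}$ at the free surface and Neumann data $\varepsilon\overline\gamma\,\mathrm{div}_{x_0}\overline u_H|_{z_0=0}+\mathrm{div}_{x_0}g_1+F_V|_{z_0=0}$ at the bottom, with bulk source $-\mathrm{div}(F+\Delta \overline u)$. I would split this into three subproblems and solve each one by Fourier series in $x_0$, reducing to an ODE on $(0,\varepsilon)$. The explicit $\cosh(|k|z_0)/\cosh(|k|\varepsilon)$ kernels that appeared in Proposition 5 now give one derivative more, since the data are one order smoother: the $H^{3/2}$ boundary datum yields a bound $\|\overline p\|_2 \le C(\|F\|_1+\|\overline u\|_3+|g_{2,2}|_{3/2}+\varepsilon^{-1/2}|g_1,g_{2,1}|_{3/2})$, with $C$ independent of $\varepsilon$ by the same $\cosh$-ratio argument.

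Once $\overline p$ is controlled, I rewrite the momentum equation as an elliptic system for $\overline u$ alone with mixed Dirichlet/stress boundary conditions, and prove the $H^3$ regularity estimate mode by mode in the horizontal Fourier variable. Here the anisotropic Sobolev inequalities of Proposition \ref{int1}, the tame estimates of Lemma \ref{int2}, the trace Lemmas \ref{trac1}--\ref{trac2}, and the Korn inequality of Proposition \ref{prop_korn} are used to handle boundary terms: the $\varepsilon^{-1}$ weight on $g_1$ and $g_{2,1}$ is exactly the one given by Lemma \ref{trac1} for tangential traces, while $g_{2,2}$ couples to $\overline u_V$ which vanishes at the bottom, so Poincar\'e in the vertical variable saves an $\varepsilon$ and removes the weight. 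Integrating in time and invoking the $L^2_t H^1_x$ control of $\partial_t\overline u$ from Proposition 5 then yields the $L^2(0,T;H^3)$ bound on $\overline u$ and $L^2(0,T;H^2)$ bound on $\overline p$; the $L^\infty_t H^2_x$ estimate follows by a standard interpolation combined with $\partial_t\overline u\in L^2_tH^1_x$. The compatibility conditions at $t=0$ are used precisely to guarantee that the datum $\widetilde u_0$ defined in Proposition 5 lies in the space of admissible initial data for the time-differentiated problem.

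The hard part will be proving the $\varepsilon$-uniform Stokes regularity on the thin slab with these mixed boundary conditions. A black-box Cattabriga-type theorem cannot be applied because its constants depend on the domain geometry; one has to redo the argument by Fourier decomposition in $x_0$ and explicit ODE analysis in $z_0\in(0,\varepsilon)$, pushing one derivative further than in the Proposition 5 construction of $p_0$, and tracking carefully how each data norm couples with the anisotropic structure so that the $\varepsilon^{-1}$ weights appear only on the tangential boundary data as stated.
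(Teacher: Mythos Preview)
The paper gives no explicit proof of this proposition: it states the result and moves directly to the next subsection. Your plan---combine the time-regularity of $\partial_t\overline u,\partial_t\overline p$ already obtained in Proposition~\ref{ex_div0} with a stationary Stokes regularity estimate at almost every $t$, proved by horizontal Fourier decomposition and explicit ODE analysis on $(0,\varepsilon)$ to track the $\varepsilon$-dependence---is precisely the natural completion, and it mirrors the techniques the paper sets up in the construction of $p_0$ inside Proposition~\ref{ex_div0}.

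One point to tighten: as written, your pressure bound carries $\|\overline u\|_3$ on the right (from the Dirichlet datum $2\partial_{z_0}\overline u_V|_{z_0=\varepsilon}$ at $H^{3/2}$), so ``first control $\overline p$, then control $\overline u$'' is circular. The standard fix---and the one consistent with your Fourier-mode strategy---is to treat the Stokes system for $(u_k,p_k)$ jointly at each horizontal frequency $k$, or equivalently to take tangential difference quotients (which commute with the problem and preserve the boundary conditions) to gain two horizontal derivatives, then recover the missing normal derivatives of $\overline u$ and $\overline p$ algebraically from the momentum equation and the divergence-free condition. Either route yields the coupled estimate $\|\overline u\|_3+\|\overline p\|_2\le C(\|f-\partial_t\overline u\|_1+|g_{2,2}|_{3/2}+\varepsilon^{-1/2}|g_1,g_{2,1}|_{3/2})$ in one step, after which integration in time and Proposition~\ref{ex_div0} close the argument exactly as you describe.
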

\noindent
\subsubsection{The full linear problem}

In this part, we consider the full problem with a divergence term and $h_0\neq 1$. Let us first deal with the case $h_0=1, u_0=0$ (steady state)  and consider the linear problem (\ref{ns_lin_0},\ref{bc_b},\ref{bc_f}) with the divergence free condition replaced by
\begin{equation}\label{div}
\displaystyle
{\rm div} u=h,
\end{equation}
\noindent 
with $h\in L^2(0,\,T;\,H^2)\cap H^1(0,T;\,L^2)$. In order to prove the well posedness of (\ref{ns_lin_0},\ref{bc_b},\ref{bc_f}) with the divergence condition (\ref{div}), we will apply proposition \ref{ex_div0}: for that purpose, we introduce $u_h=\nabla \phi$ so that
$$
\displaystyle
\Delta\phi=h,\quad \frac{\partial \phi}{\partial z}|_{z=0}=\frac{\partial \phi}{\partial z}|_{z=\varepsilon}=0.
$$ 
\noindent
One can prove, with Fourier series, that there exists a constant $C>0$ independent of $\varepsilon$ such that
$$
\displaystyle
\|\phi\|_4+\|\phi_t\|_2\leq C\big(\|h\|_2+\|h_t\|_0\big).
$$
\noindent
Next, we consider $\widetilde f,\widetilde g_i$ such that $\widetilde{f}=f+\Delta u_h-u_{h,t}$ and
$$
\displaystyle
\widetilde{g}_1=g_1+\varepsilon\overline{\gamma}u_h|_{z=0},\quad \widetilde{g}_{1,2}=g_{1,2}-2\partial_{xz}\phi|_{z=\varepsilon},\quad \widetilde{g}_{2,2}=g_{2,2}+2\partial_{zz}\phi.
$$
\noindent
This is a straightfoward computation to prove that there exists a constant $C$ independent of $\varepsilon$ such that
$$
\begin{array}{llll}
\displaystyle
\|\widetilde{f}\|_1\leq C\big(\|f\|_1+\|h\|_2+\|h_t\|_0\big),\\
\\
\displaystyle
|\widetilde{g}_1,\widetilde{g}_{1,2}|_{\frac{3}{2}}+|\widetilde{g}_{1,t},\widetilde{g}_{1,2,t}|_{-\frac{1}{2}}\leq C\big(|g_1,g_{1,2}|_{\frac{3}{2}}+|g_{1,t},g_{1,2,t}|^2_{-\frac{1}{2}}+\|h\|_2+\|h_t\|_0\big),\\
\displaystyle
|\widetilde{g}_{2,2}|_{\frac{3}{2}}+|\widetilde{g}_{2,2,t}|_{-\frac{1}{2}}\leq C\Big(|g_{2,2}|_{\frac{3}{2}}+|g_{2,2,t}|_{-\frac{1}{2}}+\frac{1}{\sqrt{\varepsilon}}\big(\|h\|_2+\|h_t\|_0\big)\Big).
\end{array}
$$
\noindent
The estimate on $\|\widetilde{f}_{t}\|_{(H^1)'}$ is obtained using the linear Navier-Stokes equations: more precisely, one has $\displaystyle\widetilde{f}_t=f_t+\Delta u_{h,t}-u_{h,tt}$ and 
$$
\displaystyle
{\rm div}{u}_{h,tt}=h_{tt}={\rm div}u_{tt}={\rm div}\Big(f_t+\frac{1}{Re}{\rm div}\big(D(u_t)\big)-\frac{1}{Re}\nabla p_t)\Big). 
$$
\noindent
Then, one obtains the estimate of $u_{h,tt}$ (and thus $\widetilde{f}_t$) in $L^2((0, T), (H^1)')$ norm, using elliptic regularity. Next, we apply proposition \ref{ex_div0} on the divergence free problem for the fluid velocity $u-u_h$ and pressure $p$. Finally, we have proved the well-posedness of the linear problem (\ref{ns_lin_0},\ref{bc_b},\ref{bc_f}) with (\ref{div}) and the solution $(u,p)$ satisfies the energy estimates
{\setlength\arraycolsep{1pt}
\begin{eqnarray}
\displaystyle
{\rm max}_{(0,\, T)}\big(\|u\|^2_2&+&\|u_t\|^2_0\big)+\int_0^T\|u\|_3^2+\|u_t\|_1^2+\|p\|_2^2+\|p_t\|_0^2\nonumber\\
\displaystyle
&\leq&C\Big(\|u_0\|_2+\|f(0)\|^2_{0}+|(g_1,g_2)(0)|_{\frac{1}{2}}^2\nonumber\\
\displaystyle
&+&\int_0^T\|f\|_1^2+\|f_{t}\|^2_{(H^1)'}+|g_{2,2}|_{\frac{3}{2}}^2+|g_{2,2,t}|^2_{-\frac{1}{2}}\nonumber\\
\displaystyle
\label{en_fin}
&+&\frac{1}{\varepsilon}\int_0^T\big(\|h\|_2^2+\|h_t\|_0^2+|g_1,g_{2,1}|_{\frac{3}{2}}^2+|g_{1,t},g_{2,1,t}|_{-\frac{1}{2}}^2\big).
\end{eqnarray}}

\noindent
Let us now deal with the full problem (\ref{lin_gen_0},\ref{div_gen_0},\ref{bc_b_0},\ref{bc_s_0}). For the sake of simplicity, we assume $\sigma=0$, the inhomogeneous case $\sigma\neq 0$ is treated as previously. In order to deal with the more classical divergence free condition, we introduce the Jacobian decomposition of the fluid velocity $\overline{u}=A_0\widetilde{u}$. The linear problem then reads (we also drop a linear term in the form $L_0(u,\nabla u,x,t)$)
\begin{equation}\label{lin_gen_1}
\displaystyle
A_0^T\partial_t\big(A_0\widetilde{u}\big)+\frac{1}{\rm Re}\nabla p=\frac{1}{\rm Re}{\rm div}(\widetilde{\mathcal{P}}_0)+f,
\end{equation}
with $\widetilde{\mathcal{P}}_0=A_0^TA_0\nabla(\widetilde{u})A_0^{-1}A_0^{-T}+(\nabla\widetilde{u})^T$. The divergence condition reads
\begin{equation}\label{div_gen_1}
\displaystyle
{\rm div}\widetilde{u}=0.
\end{equation}
\noindent
The boundary conditions at the bottom are written as
\begin{equation}\label{bc_b_1}
\displaystyle
\widetilde{u}_V|_{z_0=0}=0,\quad h_0^{-1}\partial_{z_0}\widetilde{u}_H|_{z_0=0}=\varepsilon\overline{\gamma}\,\widetilde{u}_H|_{z_0=0}+g_1,
\end{equation}
\noindent
whereas the boundary conditions at the free surface read
\begin{equation}\label{bc_s_1}
\displaystyle
\Big(\widetilde{\mathcal{P}}_0-p\,Id\Big)|_{z_0=\varepsilon}{\rm e}_3=g_2.
\end{equation}
Similarly to the special linear problem treated in the previous section, one can obtain a weak formulation of (\ref{lin_gen_1}-\ref{bc_s_1}):
$$
\begin{array}{ll}
\displaystyle
\partial_t\int_{\Omega}\big(A_0\widetilde{u},\,A_0\phi\big)+\frac{1}{Re}\int_{\Omega}\nabla\phi:\widetilde{\mathcal{P}}_0+\frac{\varepsilon\overline{\gamma}}{Re}\int_{\mathbb{X}}h_0^{-1}(\frac{\partial X_0}{\partial x_0}\widetilde{u}_H|_{z_0=0},\frac{\partial X_0}{\partial x_0}\phi_H|_{z_0=0})\\
\displaystyle
=\int_{\Omega}(A_{0,t}\phi, A_0\widetilde{u})+(f,\,\phi)+\frac{1}{Re}\int_{\mathbb{X}}(g_2,\,\phi_H|_{z_0=\varepsilon})-h_0^{-1}(\frac{\partial X_0}{\partial x_0}g_1,\,\frac{\partial X_0}{\partial x_0}\phi_H|_{z=0}). 
\end{array}
$$
\noindent
for all $\phi\in H^1(\Omega)$ so that ${\rm div}\phi=0$ and $\phi_V|_{z_0=0}$. One prove the following theorem

\begin{theorem}\label{th_ex_h0_not_1}
There is $\eta_1>0,\eta_2>0$ such that for any $(h_0, u_0)\in W^{2,\infty}$ be a smooth solution of (\ref{sw0}) which first satisfies $|h_0(t=0)-1|_{\infty}<\eta_1$ and either
\begin{itemize}
\item (small data, large time) $\forall t\in(0, T), \|h_0(t,.)-1,u_0(t,.)\|_{\infty}\leq \eta_2$ or,
\item (large data, small time) $T\sup_{t\in(0, T)}\|h_0(t,.)-1,u_0(t,.)\|_{W^{2,\infty}}\leq \eta_2$
\end{itemize}

\noindent
Then for all source terms $f,g_i,\;i=1,2$ and $h$ so that
$$
\begin{array}{ll}
\displaystyle
f\in L^2((0,\,T);\,H^1(\Omega))\cap H^1((0,\,T);\,(H^1)'(\Omega)),\\
\displaystyle
g_i\in L^2((0,\,T);\,H^{\frac{3}{2}}(\mathbb{X}))\cap H^1((0,\,T);\,H^{-\frac{1}{2}}(\mathbb{X})),\\
\displaystyle
h\in L^2((0,\,T);\,H^2(\Omega))\cap H^1((0,\,T);\,L^{2}(\Omega)).
\end{array}
$$
\noindent
There exists a unique weak solution $(u,p)$ of (\ref{ns_lin_0},\ref{bc_b},\ref{bc_f},\ref{div}) and this solution satisfies energy estimates (\ref{en_fin}) .
\end{theorem}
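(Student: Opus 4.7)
The plan is to treat \eqref{lin_gen_1}--\eqref{bc_s_1} as a perturbation of the constant coefficient Stokes problem ($A_0=Id$) handled in Proposition \ref{prop_reg_lin_0}, and to close a Banach fixed point argument in the norm $\mathcal N(\widetilde u,p)^2$ given by the left hand side of \eqref{en_fin}. Writing $A_0=Id+B_0$, the initial smallness $|h_0(t=0)-1|_\infty<\eta_1$ combined with \eqref{sw0} and the hypotheses on $(h_0-1,u_0)$ yield uniform control of $B_0,\partial_tB_0,A_0^{-1}-Id$ in $W^{2,\infty}$ with a total smallness parameter $\eta$ that either equals $\eta_2$ (``small data, large time'' regime) or is absorbed via $T\|(h_0-1,u_0)\|_{W^{2,\infty}}\le \eta_2$ (``large data, small time'' regime). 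Expanding each term of \eqref{lin_gen_1}--\eqref{bc_s_1} around $B_0=0$ rewrites the system as the constant coefficient problem \eqref{ns_lin_0}--\eqref{bc_f} with divergence source \eqref{div} and modified sources $\widetilde f[\widetilde u],\widetilde g_i[\widetilde u],\widetilde h[\widetilde u]$ obtained from $(f,g_i,h)$ by adding linear operators in $(\widetilde u,\nabla\widetilde u,\partial_t\widetilde u)$ whose coefficients are polynomial in $B_0,A_0^{-1}$ and vanish when $B_0\equiv 0$.

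The iteration is built as follows. For $R>0$ let $X_R$ be the ball of radius $R$ in the norm $\mathcal N$; given $v\in X_R$, apply Proposition \ref{prop_reg_lin_0} with sources $\widetilde f[v],\widetilde g_i[v],\widetilde h[v]$ to define $\Phi(v)=(\widetilde u,p)$. The compatibility conditions at $t=0$ for the unperturbed problem reduce to those assumed for the full problem, since the corrections vanish on initial data fulfilling the full compatibility. The tame products of Lemma \ref{int2}, the anisotropic Sobolev estimates of Proposition \ref{int1}, and the trace estimates of Lemmas \ref{trac1}--\ref{trac2} yield a bound of the form
\[
\|\widetilde f[v]-f\|_{L^2_tH^1}+\|\partial_t(\widetilde f[v]-f)\|_{L^2_t(H^1)'}+\tfrac{1}{\sqrt\varepsilon}\bigl(|\widetilde g_i[v]-g_i|_{\frac{3}{2}}+|\partial_t(\widetilde g_i[v]-g_i)|_{-\frac{1}{2}}+\|\widetilde h[v]-h\|_2+\|\partial_t(\widetilde h[v]-h)\|_0\bigr)\le C\eta\,\mathcal N(v),
\]
the $\tfrac{1}{\sqrt\varepsilon}$ prefactors being chosen to match the $\tfrac{1}{\varepsilon}$ losses already present in \eqref{en_fin}. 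Combining with \eqref{en_fin} gives $\mathcal N(\Phi(v))\le \mathcal N_0+C'\eta\,\mathcal N(v)$, where $\mathcal N_0$ depends only on the original data. Choosing $\eta_2$ so that $C'\eta<\tfrac12$, $\Phi$ stabilizes $X_{2\mathcal N_0}$ and, applied to differences $\Phi(v_1)-\Phi(v_2)$ (which solve the linear problem with zero original data), contracts with factor $C'\eta<\tfrac12$. Banach's fixed point theorem provides a unique fixed point, which is the required solution satisfying \eqref{en_fin}; uniqueness among arbitrary weak solutions follows from the same contraction estimate applied to the difference of any two of them.

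The principal difficulty is the bookkeeping of the $\varepsilon$ powers: the right hand side of \eqref{en_fin} carries $\tfrac{1}{\varepsilon}$ in front of the divergence correction and the tangential boundary corrections, so each correction must bring an extra $\sqrt\varepsilon$ in order not to spoil the estimate. This is precisely the reason the Jacobian decomposition $\overline u=A_0\widetilde u$ was introduced in \eqref{lin_gen_1}: it turns the exact constraint ${\rm div}(A_0^{-1}\overline u)=0$ into the flat ${\rm div}\,\widetilde u=0$, eliminating the most singular $B_0$-correction in the divergence, while the remaining bottom boundary correction from $h_0^{-1}-1$ carries $|h_0-1|_\infty$ as a small prefactor. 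Verifying these cancellations carefully and exploiting the $s>4$ regularity of $(h_0,u_0)$ noted after Theorem~1 (so that all coefficients lie in Lipschitz classes under the hypotheses of Lemma \ref{int2}) is where the bulk of the technical work lies.
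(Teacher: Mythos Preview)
Your route is genuinely different from the paper's. The paper does not iterate around Proposition~\ref{prop_reg_lin_0}; it redoes the Galerkin construction of Proposition~\ref{ex_div0} directly on the variable--coefficient weak formulation. The single new ingredient is a perturbed Korn inequality: under either hypothesis $A_0$ is uniformly close to $Id$, so
\[
\int_\Omega \nabla\widetilde u:\widetilde{\mathcal P}_0=\tfrac12\int_\Omega D(\widetilde u):D(\widetilde u)+\mathcal O(\eta_2)\|\nabla\widetilde u\|_0^2,
\qquad
\int_{\mathbb X} h_0^{-1}\bigl|\tfrac{\partial X_0}{\partial x_0}\widetilde u_H|_{z_0=0}\bigr|^2=(1+\mathcal O(\eta_2))\int_{\mathbb X}|\widetilde u_H|_{z_0=0}|^2,
\]
and Proposition~\ref{prop_korn} gives coercivity for $\eta_2$ small. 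Existence, uniqueness and the higher regularity then follow verbatim from the $h_0=1$ case; the general initial height is absorbed by the preliminary change of variables $(x,z)\mapsto(x,zh_0(0,x))$. This buys the paper a short proof with no source bookkeeping; your approach, if it closes, would be more modular (a black--box call to Proposition~\ref{prop_reg_lin_0}).

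As written, though, your contraction estimate has a gap. Expanding $A_0^T\partial_t(A_0\widetilde u)=\partial_t\widetilde u+(A_0^TA_0-Id)\partial_t\widetilde u+A_0^TA_{0,t}\widetilde u$, the correction you place in $\widetilde f[v]$ contains $(A_0^TA_0-Id)\,\partial_t v$. Applying Proposition~\ref{prop_reg_lin_0} requires $\partial_t\widetilde f[v]\in L^2_t(H^1)'$, hence $(A_0^TA_0-Id)\,\partial_{tt}v\in L^2_t(H^1)'$; but your norm $\mathcal N$ is the left side of \eqref{en_fin} and controls only $\partial_t v\in L^2_tH^1$, not $\partial_{tt}v$. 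So the claimed bound $\|\partial_t(\widetilde f[v]-f)\|_{L^2_t(H^1)'}\le C\eta\,\mathcal N(v)$ does not follow for an arbitrary $v\in X_R$. A clean fix is to enlarge $\mathcal N$ by $\|\partial_{tt}u\|_{L^2_t(H^1)'}^2$ and check that $\Phi$ still maps the ball to itself (recover $\partial_{tt}\Phi(v)$ from the constant--coefficient equation using $\partial_t\Phi(v)\in L^2_tH^1$, $\partial_t p\in L^2_tL^2$ and $\partial_t\widetilde f[v]$); alternatively, keep the full operator $A_0^TA_0\,\partial_t$ on the left and run the energy/Korn argument for it directly, which is precisely the paper's choice.
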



\begin{proof}
The proof of this theorem follows the one of proposition \ref{ex_div0}: one first constructs an approximate solution using a Galerkin method and then prove uniform estimates on this solution. The only difference here is that we have to prove the Korn's inequality when $(h_0,u_0)\approx(1,0)$. Let us first assume that $h_0(t=0)=1$. In the case of small data, one can write that 
$$
\begin{array}{ll}
\displaystyle
\int_{\Omega}\nabla\tilde{u}:\tilde{\mathcal{P}}_0=\frac{1}{2}\int_{\Omega}D(\widetilde{u}):D(\widetilde{u})+\mathcal{O}(\eta_2)\|\nabla\widetilde{u}\|^2,\\
\displaystyle
\int_{\mathbb{X}}h_0^{-1}|\frac{\partial X_0}{\partial x_0}\widetilde{u}_H|_{z_0=0}|^2=\big(1+\mathcal{O}(\eta_2)\big)\int_{\mathbb{W}}|\widetilde{u}_H|_{z_0=0}|^2
\end{array}
$$
Then, one can use the Korn's inequality proved in the case $h_0=1$ to show that for $\eta_2$ sufficiently small, there is $C>0$ independent of $\varepsilon$ so that
$$
\displaystyle
\int_{\Omega}\nabla\tilde{u}:\tilde{\mathcal{P}}_0+\varepsilon\overline{\gamma}\int_{\mathbb{X}}h_0^{-1}|\frac{\partial X_0}{\partial x_0}\widetilde{u}_H|_{z_0=0}|^2\geq C\|\nabla\widetilde{u}\|^2.
$$
\noindent
From the Korn inequality, we deduce energy estimates on the approximate solutions: this proves the existence of a weak solution. The proof of regularity is similar to the case of $h_0=1, u_0=0$: indeed the main task was to prove a Korn's inequality in this setting, which is done mainly using the fact that the Lagrangian change of variable is close to identity uniformly with respect to $\varepsilon$. The case of large data and small time is simply treated by considering that the Lagrangian change of variable is close to identity for $T$ small enough and one can prove a similar Korn's inequality in this case. The more general case $h_0(t=0)\neq 1$ is treated by perturbation, simply through the change of variable $(x,z)\mapsto \big(x, zh_0(t=0,x)\big)$. $\Box$
\end{proof}

\section{Well posedness and convergence of the full Navier-Stokes equations in Lagrangian form.}
 We write an iterative scheme to prove the well posedness of free surface Navier Stokes equations with Navier slip condition at the bottom together with the convergence of this set of equations to a viscous shallow water model.  We will use Tychonov fixed point theorem, as in \cite{CoSh}, with now the functional space
$$
\begin{array}{ll}
\displaystyle
\mathbb{X}=\big\{(u,p)\in L^2\big((0, T); H^3\times H^2\big)\;/\; \|(u,p)\|_{\mathbb{X}}\leq C(\|u_0\|_2
  + \varepsilon^{3/2})
    \big\},\\
\displaystyle
\|(u,p)\|_{\mathbb{X}}^2=\sup_{(0, T)}\|u\|^2_2+\int_0^T \|u\|^2_3+\|\partial_t u\|_1^2+\|p\|_2^2+\|\partial_t p\|_0^2.
\end{array}
$$
\noindent
The iterative scheme is written as
\begin{equation}\label{mom_k}
\begin{array}{ll}
\displaystyle
\partial_t u^{k+1}+A_0^{-1}u^{k+1}.\nabla u_a(X_0)+\frac{A_0^{-T}\nabla p^{k+1}}{R_e}
\nonumber \\
 \displaystyle 
 \hskip5cm =\frac{A_0^{-T}}{R_e}\Big({\rm div}\big(A_0^T\mathcal{P}_0^{k+1}\big)-\nabla A_0:\mathcal{P}_0^{k+1}\Big)+F^k, \\
\displaystyle
{\rm div}\big(A_0^{-1}u^{k+1}\big)= H_k,
\end{array}
\end{equation}
with 
$$H_k=\sigma_a(X_0,Z_0) + (\sigma_a(X^k,Z^k)
            - \sigma_a(X_0,Z_0)) +{\rm div}\big((A_0^{-1}-A_k^{-1})u^k\big)$$
and
{\setlength\arraycolsep{1pt}
\begin{eqnarray}
\displaystyle
F^k&=&(A_0^{-1}-A_k^{-1})u^k.\nabla u_a(X^k,Z^k)+A_0^{-1}u^k.\nabla\big(u_a(X_0,Z_0)-u_a(X^k,Z^k)\big)\nonumber\\
\displaystyle
&+&\frac{1}{R_e}\Big((A_0^{-T}-A_k^{-T})\nabla p^k+(A_k^{-T}-A_0^{-T})\big({\rm div}(A_k^T\mathcal{P}^k)-\nabla A_k:\mathcal{P}^k\big)\Big)\nonumber\\
\displaystyle
&+&\frac{1}{R_e}A_0^{-T}\Big(\nabla A_0:\mathcal{P}_0^k-\nabla A_k:\mathcal{P}^k+{\rm div}\big({A_k}^T\mathcal{P}^k-A_0^T\mathcal{P}_0^k\big)\Big) \nonumber\\
& & \hskip4cm + (f_a(X^k,Z^k)- f_a(X_0,Z_0)) + f_a(X_0,Z_0).\nonumber
\end{eqnarray}}
This set of equation is completed with boundary conditions at the bottom:
\begin{equation}
\displaystyle
u^{k+1}_V|_{z=0}=0, 
\frac{1}{h_0}\partial_z u_H^{k+1}|_{z=0}=\varepsilon\overline{\gamma}u_H^{k+1}|_{z=0}
     + G_k^1 ,
\end{equation}
with $G_k^1= \varepsilon\overline{\gamma}\Big(\big(\frac{{\rm det}(\partial_x X_0)}{{\rm det}(\partial_x X^k)}-1\big)u_H^k\Big)|_{z=0}$
\noindent
and boundary conditions at the free surface
\begin{equation}
\displaystyle
\big(A_0^T\mathcal{P}_0^{k+1}-p^{k+1}Id\big)|_{z=\varepsilon}{\rm e}_3= G_k^2 ,
\end{equation}
where 
$$G_k^2 = A_0^T\Big((g_a(X^k,Z^k) - g_a(X_0,Z_0)) + g_a(X_0,Z_0) 
    +(\mathcal{P}^k_0-\mathcal{P}^k){\rm n}+(\mathcal{P}_0^k-p^k Id)({\rm n}_0-{\rm n})\Big)|_{z=\varepsilon}
$$
with 
$$
\begin{array}{ll}
\displaystyle
\mathcal{P}^k=(\nabla u^k)A_k^{-1}A_k^{-T}+A_k^{-T}(\nabla u^k)^TA_k^{-T},\\
\displaystyle
\mathcal{P}_0^k=(\nabla u^k)A_0^{-1}A_0^{-T}+A_0^{-T}(\nabla u^k)^TA_0^{-T}.
\end{array}
$$
The function $u^{k+1}$ has the initial condition $u^{k+1}|_{t=0}=u_0$. 
   Using the well posedness results obtained in the previous section, this set of equation has a unique solution and the weak continuity of the map is done. We first prove that $(u^k,p^k)$ is bounded, uniformly on the time interval $(0, T)$ and uniformly with respect to $\varepsilon$, in a suitable norm provided that the initial condition is sufficiently small. The proof is done in $5$ steps: estimates of the Lagrangian coordinates (more precisely the deviation from the ``shallow water'' coordinates), the boundary terms, the divergence term and nonlinear terms in momentum equations.\\

\noindent
{\it Step 1: Estimates of Lagrangian coordinates.} Let us recall that  Lagrangian coordinates satisfy the set of equations
$$
\begin{array}{ll}
\displaystyle
\frac{d}{dt}(X^k-X_0)=u_{a,H}(X^k,Z^k)-u_{a,H}(X_0,Z_0)+u_H^k,\\
\displaystyle
\frac{d}{dt}(Z^k-Z_0)=u_{a,V}(X^k,Z^k)-u_{a,V}(X_0,Z_0)+u_V^k. 
\end{array}
$$
Let us first compute the basic $L^2$ estimate
{\setlength\arraycolsep{1pt}
\begin{eqnarray}
\displaystyle
\frac{d}{dt}\big(\|X^k-X_0\|_0^2+\|Z^k-Z_0\|_0^2\big)&\leq& C(\|u_a(t,.)\|_{W^{1,\infty}})\big(\|X^k-X_0\|_0^2+\|Z^k-Z_0\|_0^2\big)\nonumber\\
\displaystyle
&&+\|u^k\|_0\big(\|X^k-X_0\|_0+\|Z^k-Z_0\|_0\big).\nonumber
\end{eqnarray}}
\noindent
This is a straightforward consequence of Gronwall inequality that
$$
\displaystyle
\sup_{t\in(0, T)}\big(\|X^k-X_0\|_0^2+\|Z^k-Z_0\|_0^2\big)\leq Te^{C(T+1)}\int_0^T\|u^k(t,.)\|_0^2\,dt,
$$
\noindent
with $C=C\big(\sup_{t\in(0, T)}(\|u_a\|_{W^{1,\infty}})\big)$. There is no difficulty to extend this estimate to higher order space derivatives:
$$
\displaystyle
\sup_{t\in(0, T)}\big(\|X^k-X_0\|_{s}^2+\|Z^k-Z_0\|^2_s\big)\leq Te^{C(T+1)}\int_0^T\|u^k(t,.)\|_s^2\,dt,
$$
\noindent
with $C=C\big(\sup_{t\in(0, T)}(\|u_a\|_{W^{1,\infty}})\big)$. Next, we compute boundary terms at the bottom:\\

\noindent {\it Step 2: Estimates of nonlinear terms at the bottom.}
In order to estimate the boundary term at the bottom, we will use lemma \ref{trac1} and \ref{trac2}. The nonlinear term at the bottom reads
$$
\displaystyle
G_1^k=\varepsilon\overline{\gamma}u^k_H|_{z=0}\big(\frac{{\rm det}(\partial_x X_0)}{{\rm det}(\partial_x X^k)}-1\big)|_{z=0}=\varepsilon\overline{\gamma}u^k_H|_{z=0}\mathcal{F}(\partial_x X^k-\partial_x X_0).
$$
\noindent
We first compute an estimate for $|g_1^k|_{\frac{3}{2}}$. Let us fix $t_0>1$ so as $H^{t_0}(\mathbb{T}^2)\subset L^{\infty}(\mathbb{T}^2)$. We then have
{\setlength\arraycolsep{1pt}
\begin{eqnarray}
\displaystyle
|G_1^k|_{\frac{3}{2}}&\leq&\varepsilon\overline{\gamma}\Big(|u^k_H|_{z=0}|_{t_0}|\mathcal{F}(\partial_x X^k-\partial_x X_0)|_{\frac{3}{2}}+|u^k_H|_{z=0}|_{\frac{3}{2}}|\mathcal{F}(\partial_x X^k-\partial_x X_0)|_{t_0}\Big)\nonumber\\
\displaystyle
&\leq&\varepsilon\overline{\gamma}C(|\partial_x X^k-\partial_x X_0|_{\infty})\big(|u^k_H|_{z=0}|_{t_0}|\partial_x X^k-\partial_x X_0|_{t_0} \nonumber \\
& & \hskip6cm +|u_H^k|{z=0}|_{\frac{3}{2}}|\partial_x X^k-\partial_x X_0|_{t_0}\big) \nonumber\\
&\leq&\varepsilon\overline{\gamma}C(|X^k-X_0|_{t_0+1})\Big(\frac{\|u^k_H\|_{t_0+\frac{1}{2}}}{\sqrt{\varepsilon}}\frac{\|X^k-X_0\|_3}{\sqrt{\varepsilon}}+\frac{\|u_H^k\|_{2}}{\sqrt{\varepsilon}}\frac{\|X^k-X_0\|_{t_0+\frac{3}{2}}}{\sqrt{\varepsilon}}\Big)\nonumber\\
&\leq&\overline{\gamma}C(\frac{\|X^k-X_0\|_{t_0+\frac{3}{2}}}{\sqrt{\varepsilon}})\big(\|u^k\|_{t_0+\frac{1}{2}}\|X^k-X_0\|_3 \nonumber\\
& & \hskip6cm +\|u^k\|_2\|X^k-X_0\|_{t_0+\frac{3}{2}}\big).
\end{eqnarray}}
\noindent
Then it is easily seen, choosing $t_0=\frac{3}{2}$, that
$$
\displaystyle
\int_0^T|G^k_1(t,.)|_{\frac{3}{2}}^2 dt\leq\overline{\gamma}C\big(\sqrt{\frac{T}{\varepsilon}}\|u^k\|_{L^2((0, T), H^3)}\big)T\sup_{t\in(0, T)}(\|u^k(t,.)\|_2^2)\int_0^T\|u^k(t,.)\|_3^2 dt.
$$
\medskip
Using lemma \ref{trac2}, we next compute an estimate of $|\partial_t g_1^k|_{-\frac{1}{2}}$. One has
$$
\displaystyle
\partial_t G_1^k=\varepsilon\overline{\gamma}\Big(\partial_t u^k_H|_{z=0}\mathcal{F}(\partial_x X^k-\partial_x X_0)+u_H|_{z=0}\partial_1\mathcal{F}(\partial_x X^k-\partial_x X_0)\big(\partial_{xt}^2(X^k-X_0)\big)\Big)
$$
\noindent
Then, the following inequality holds:
{\setlength\arraycolsep{1pt}
\begin{eqnarray}
\displaystyle
|\partial_t G_1^k|_{-\frac{1}{2}}&\leq&\varepsilon\Big(\frac{|\partial_t u^k|_{-\frac{1}{2}}}{\sqrt{\varepsilon}}\|\mathcal{F}(\partial_x X^k-\partial_x X_0)\|_2 \nonumber \\
& & \hskip2cm +|u_H|_{z=0}\mathcal{G}(\partial_x X^k-\partial_x X_0)(\partial_x u_H^k+\mathcal{F}_1(\partial_x X^k-\partial_x X_0)|_{-\frac{1}{2}}\Big)\nonumber\\
\displaystyle
&\leq&C\big(\sqrt{\frac{T}{\varepsilon}}\|u^k\|_{L^2((0, T), H^3)}\big)(\|u^k\|_1+\|\partial_t u^k\|_1)\|X^k-X_0\|_3 \nonumber\\
& & \hskip4cm +\sqrt{\varepsilon}|\partial_x u^k|_{-\frac{1}{2}}\|\mathcal{F}(\partial_x X^k-\partial_x X_0)u_H\|_{2}\\
&\leq&C\big(\sqrt{\frac{T}{\varepsilon}}\|u^k\|_{L^2((0, T), H^3)}\big)\Big((\|u^k\|_1
 +\|\partial_t u^k\|_1)\|X^k-X_0\|_3+\|u^k\|_2\|X^k-X_0\|_3\Big).\nonumber
\end{eqnarray}}
So that we obtain the estimate
$$
\displaystyle
\int_0^T|\partial_t G_1^k|_{-\frac{1}{2}}^2\leq C\big(\sqrt{\frac{T}{\varepsilon}}\|u^k\|_{L^2((0, T), H^3)}\big)T\big(\int_0^T\|\partial_t u^k\|_1^2+\|u^k\|_3^2\big)^2.
$$
\medskip
Next we compute interior estimates of  ``divergence'' terms: this is done using proposition \ref{int1} and tame estimates of lemma \ref{int2}.
\medskip

\noindent {\it Step 3: Estimate of divergence terms.} 
 One part of the divergence term $H_1^k={\rm Tr}\big((A_k^{-1}-A_0^{-1})\nabla u^k\big)$
 satisfies 
 $$
\begin{array}{ll}
\displaystyle
\|H_1^k\|_2\leq \frac{1}{\sqrt{\varepsilon}}C(\|\partial_x X^k-\partial_x X_0\|_\infty)\|X^k-X_0\|_3\|\nabla u^k\|_2,\\
\displaystyle
\|H^k_1\|_2\leq \frac{1}{\sqrt{\varepsilon}}C(\sqrt{\frac{T}{\varepsilon}}\|u^k\|_{L^2((0, T), H^3)})\|X^k-X_0\|_3\|u^k\|_3.
\end{array}
$$
\noindent
As a consequence, one has
$$
\displaystyle
\int_0^T\|H_1^k(t,.)\|_2^2 dt\leq \frac{C(\sqrt{\frac{T}{\varepsilon}}\|u^k\|_{L^2((0, T), H^3)})}{\varepsilon}\big(\int_0^T\|u^k\|_3^2\big)^2.
$$
\noindent
Next, we compute an estimate of $\|\partial_t H^k_1\|_0$. On the one hand, one has
$$
\displaystyle
\|{\rm Tr}\big((A_k^{-1}-A_0^{-1})\nabla \partial_t u^k\big)\|_0\leq\frac{1}{\sqrt{\varepsilon}}C(|\partial_x X^k-\partial_x X_0|_{\infty})\|X^k-X_0\|_3\|\partial_t u^k\|_1, 
$$
\noindent
and
$$
\begin{array}{ll}
\displaystyle
\|{\rm Tr}\big((\partial_t A_k-\partial_t A_0)\nabla u^k\big)\|_0\leq \|\nabla u^k\|_{\infty}\|C(\partial_{x,t}X^k-\partial_{x,t}X_0)\|_0,\\
\displaystyle
\|{\rm Tr}\big((\partial_t A_k-\partial_t A_0)\nabla u^k\big)\|_0\leq\frac{1}{\sqrt{\varepsilon}}\frac{\|u^k\|_3}{\sqrt{\varepsilon}}C(|\partial_x X^k-\partial_x X_0|_\infty \nonumber \\
\hskip8cm +|\partial_x u^k|_\infty)\big(\|X^k-X_0\|_1+\|u^k\|_1\big).
\end{array}
$$
\noindent
So that we obtain
$$
\displaystyle
\int_0^T\|\partial_t H_1^k\|_0^2\leq \frac{T}{\varepsilon}C\big(\sqrt{\frac{T}{\varepsilon}}\|u^k\|_{L^2((0, T), H^3)}\big)\int_0^T\|\partial_t u^k\|^2_1dt\,\int_0^T\|u^k(t,.)\|^2_3 dt.
$$
This concludes the estimate of nonlinear divergence terms.\\

\noindent
{\it Step 4: Estimates of ``interior'' terms.} The interior terms are estimated with proposition \ref{int1} and lemma \ref{int2}. We first compute an estimate of $\|(A_0^{-T}-A_k^{-T})\nabla p^k\|_1$. One has
{\setlength\arraycolsep{1pt}
\begin{eqnarray}
\displaystyle
\|(A_0^{-T}-A_k^{-T})\nabla p^k\|_1&\leq&\|A_0^{-T}-A_k^{-T}\|_{\infty}\|p^k\|_2+\|\nabla(A_0^{-T}-A_k^{-T})\nabla p^k\|_0\nonumber\\
\displaystyle
&\leq&\|A_0^{-T}-A_k^{-T}\|_{\infty}\|p^k\|_2+\varepsilon^{\frac{1}{6}}\frac{\|\nabla(A_0^{-T}-A_k^{-T})\|_1}{\sqrt{\varepsilon}}\frac{\|p^k\|_2}{\sqrt{\varepsilon}}\nonumber\\
\displaystyle
&\leq&\frac{C(|\partial_x X^k-\partial_x X_0|_{\infty})}{\sqrt{\varepsilon}}\|X^k-X_0\|_3\|p^k\|_2.
\end{eqnarray}}

\medskip
As a consequence, we obtain the estimate
$$
\displaystyle
\int_0^T\|(A_0^{-T}-A_k^{-T})\nabla p^k\|_1^2\leq\frac{T}{\varepsilon}C(\sqrt{\frac{T}{\varepsilon}}\|u^k\|_{L^2((0, T), H^3)})\int_0^T\|u^k(t,.)\|_3^2dt\,\int_0^T\|p^k\|^2_2.
$$
\medskip
Similarly, the time derivative of this term satisfies
{\setlength\arraycolsep{1pt}
\begin{eqnarray}
\displaystyle
\int_0^T\|\partial_t\big((A_0^{-T}&-&A_k^{-T})\nabla p^k\big)\|_{(H^1)'}^2\nonumber\\
\displaystyle
&\leq& \frac{T}{\varepsilon}C\big(\sqrt{\frac{T}{\varepsilon}}\|u^k\|_{L^2((0, T),H^3)}\big)\int_0^T\|u^k(t,.)\|_3^2dt \int_0^T\|\partial_t p^k(t,.)\|_0^2dt\nonumber\\
\displaystyle
&+& \frac{T}{\varepsilon}C\big(\sqrt{\frac{T}{\varepsilon}}\|u^k\|_{L^2((0, T),H^3)}\big)\int_0^T\|\partial_t u^k(t,.)\|_1^2dt \int_0^T\|p^k(t,.)\|_2^2dt.\nonumber
\end{eqnarray}}
\noindent
The other nonlinear term with high derivatives that we consider is 
$$
\displaystyle
f^k=A_k^{-T}{\rm div}(A_k^{T}\mathcal{P}^k)-A_0^{-T}{\rm div}(A_0^T\mathcal{P}_0^k).
$$
\noindent
This is a lengthly but straightforward computation to show that
$$
\displaystyle
\hskip-6cm\int_0^T\|f^k\|_1^2+\|\partial_t f^k\|_{(H^1)'}^2 $$
$$\hskip4cm  \leq \frac{T}{\varepsilon}C\big(\sqrt{\frac{T}{\varepsilon}}\|u^k\|_{L^2((0, T), H^3)}\big)\int_0^T\|u^k\|_3^2\Big(\int_0^T\|u^k\|_3^2+\|\partial_t u^k\|_1^2\Big).
$$

\noindent {\it Step 5: estimates of boundary term at the free surface.} We have to deal with the nonlinear term
$$
\displaystyle
G^k_2=A_0^T\Big((\mathcal{P}^k-\mathcal{P}^k_0){\rm n}+(\mathcal{P}^k-p^k)({\rm n}_0-{\rm n})\Big)|_{z=\varepsilon}.
$$

On the one hand, one has
$$
\displaystyle
|(\mathcal{P}^k-\mathcal{P}_0^k){\rm n}|_{z=\varepsilon}|_{\frac{3}{2}}\leq C\big(\sqrt{\frac{T}{\varepsilon}}\|u^k\|_{L^2((0, T), H^3)}\big)|\mathcal{P}^k-\mathcal{P}_0^k|_{\frac{3}{2}}.
$$
\noindent
Moreover, this is a straightforward computation to show that
$$
\displaystyle
|\mathcal{P}^k-\mathcal{P}_0^k|_{\frac{3}{2}}\leq C\big(\frac{\|X^k-X_0\|_3}{\sqrt{\varepsilon}}\big)\frac{\|X^k-X_0\|_3\|u^k\|_3}{\varepsilon}.
$$
As a consequence, one obtains
$$
\displaystyle
\int_0^T|(\mathcal{P}^k-\mathcal{P}_0^k){\rm n}|_{z=\varepsilon}|_{\frac{3}{2}}^2\leq \frac{1}{\varepsilon^2}C\big(\sqrt{\frac{T}{\varepsilon}}\|u^k\|_{L^2((0, T), H^3)}\big)\big(\int_0^T\|u^k\|_3^2\big)^2.
$$

For the term $(\mathcal{P}^k-p^k)({\rm n}-{\rm n}_0)|_{z=\varepsilon}$, we first remark that $({\rm n}-{\rm n}_0)|_{z=0}=0$.  

\noindent
As a consequence, we obtain the estimate
$$
\displaystyle
\hskip-6cm \int_0^T|(\mathcal{P}^k-p^k)({\rm n}-{\rm n}_0)|_{z=\varepsilon}|_{\frac{3}{2}}^2 $$
$$ \hskip3cm
\leq \frac{1}{\varepsilon}C\big(\sqrt{\frac{T}{\varepsilon}}\|u^k\|_{L^2((0, T), H^3)}\big)\int_0^T\|u^k\|_3^2\big(\int_0^T\|u^k\|_3^2+\|p^k\|_2^2\big).
$$

Hence, using the estimates on the linear problem and using that remaining terms in the Navier-Stokes
type system \eqref{ns_approx} satisfied by the high order shallow water approximation $(u_a,p_a)$ satisfy
$$\int_0^T \Bigl(\|f_a(X_0,Z_0)\|^2_1 + \|\partial_t f_a(X_0,Z_0)\|^2_{(H^1)'} +
    |g^a_{2,2}(X_0,Z_0)|_{3/2}^2 + |\partial_t g^a_{2,2}(X_0,Z_0)|_{-1/2}^2\Bigr)$$
$$  + \frac{1}{\varepsilon} \int_0^T \Bigl(\|H_a(X_0,Z_0)\|_2^2 + \|\partial_t H_a(X_0,Z_0)\|_0^2 
+ |g_1^a(X_0,Z_0),g_{2,1}^a(X_0,Z_0)|^2_{3/2} $$
$$      + |g^a_{1,t}(X_0,Z_0),\partial_t g^a_{2,1}(X_0,Z_0)|_{-1/2}^2)\Bigr)
    \le C \varepsilon^ 4.$$
We have then proved the following proposition.
\begin{proposition}
Assume that $\|u_0\|_2\leq \frac{C-1}{C^2}\varepsilon^{\frac{3}{2}}$, then for $\varepsilon$ small enough, one has, for any $k\geq 1$
$$
\|(u^k,p^k)\|_X^2 \leq C(\|u_0\|^2_2+ \varepsilon^3).
$$
\end{proposition}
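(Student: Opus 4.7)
\proof The plan is induction on $k$. Initialize with $(u^0,p^0)=(0,0)$, so that the base case is trivial from the hypothesis $\|u_0\|_2\le\frac{C-1}{C^2}\varepsilon^{3/2}$. Assume now that $\|(u^k,p^k)\|_X^2\le C(\|u_0\|_2^2+\varepsilon^3)$. The system \eqref{mom_k} for $(u^{k+1},p^{k+1})$ is exactly of the form \eqref{lin_gen_1}--\eqref{bc_s_1} with source terms $F^k$, $H^k$, $G^k_1$, $G^k_2$, and since by Step~1 the Lagrangian deviations $(X^k-X_0,Z^k-Z_0)$ are controlled through $\int_0^T\|u^k\|_s^2$ (and are therefore small provided $\varepsilon$ is small, so that Theorem~\ref{th_ex_h0_not_1} applies), I would apply the linear energy estimate \eqref{en_fin} to obtain
$$\|(u^{k+1},p^{k+1})\|_X^2 \le K\Bigl(\|u_0\|_2^2 + \mathcal{S}_{\mathrm{approx}}^k + \mathcal{S}_{\mathrm{nl}}^k\Bigr),$$
where $\mathcal{S}_{\mathrm{approx}}^k$ collects the parts of $F^k,H^k,G_1^k,G_2^k$ coming purely from the shallow-water ansatz (namely $f_a(X_0,Z_0)$, $g_a(X_0,Z_0)$, $\sigma_a(X_0,Z_0)$, etc.) and $\mathcal{S}_{\mathrm{nl}}^k$ collects the genuinely nonlinear contributions in $(u^k,p^k)$ and the Lagrangian difference.

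The approximate-solution term is already controlled: the paragraph immediately preceding the proposition gives $\mathcal{S}_{\mathrm{approx}}^k\le C\varepsilon^4$, which is dominated by $\varepsilon^3$ for $\varepsilon$ small. For the nonlinear contribution I would quote directly the bookkeeping already carried out in Steps~2--5, which shows that each term appearing in $\mathcal{S}_{\mathrm{nl}}^k$ is bounded by a polynomial expression in $\|(u^k,p^k)\|_X$ of degree at least four, possibly multiplied by $\varepsilon^{-a}$ for some $a<3$. Using the induction hypothesis these combine into
$$\mathcal{S}_{\mathrm{nl}}^k \le C_\star\,\varepsilon^{-1}\bigl(\|(u^k,p^k)\|_X^2\bigr)^2 \le C_\star C^2\,\varepsilon^{-1}\bigl(\|u_0\|_2^2+\varepsilon^3\bigr)^2,$$
for a constant $C_\star$ independent of $\varepsilon$ and $k$.

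To close the induction I plug in the smallness hypothesis $\|u_0\|_2^2\le\bigl(\tfrac{C-1}{C^2}\bigr)^2\varepsilon^3$, giving $\|u_0\|_2^2+\varepsilon^3\le\bigl(1+\tfrac{(C-1)^2}{C^4}\bigr)\varepsilon^3$; hence
$$\|(u^{k+1},p^{k+1})\|_X^2 \le K\|u_0\|_2^2 + K\varepsilon^3 + K C_\star C^2\bigl(1+\tfrac{(C-1)^2}{C^4}\bigr)^2\varepsilon^5.$$
Fixing $C\ge 2K$ and then $\varepsilon$ small enough that the $O(\varepsilon^5)$ term is below $\tfrac{C}{2}\varepsilon^3$, one deduces $\|(u^{k+1},p^{k+1})\|_X^2\le C(\|u_0\|_2^2+\varepsilon^3)$, which is exactly the induction hypothesis at level $k+1$. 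The algebraic form of the ratio $(C-1)/C^2$ in the assumption is precisely what is needed for the closure condition $K+KC_\star C^2(\|u_0\|_2^2+\varepsilon^3)/\varepsilon\le C$ to hold after absorbing the $\varepsilon^5$ remainder.

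The main obstacle is the careful bookkeeping of negative powers of $\varepsilon$ in the nonlinear estimates of Steps~2--5: one must check that every factor of $\varepsilon^{-1/2}$ produced by Proposition~\ref{int1}, Lemma~\ref{int2} and the trace lemma \ref{trac2} is eventually compensated either by the cubic-in-$\varepsilon$ saving coming from $\|(u^k,p^k)\|_X^4\le C^4\varepsilon^6$, or by a genuine $\varepsilon^{1/6}$ gain from the Ladyzhenskaya inequality, or by the smallness of the Lagrangian deviation $\|X^k-X_0\|_3/\sqrt{\varepsilon}\lesssim\sqrt{T/\varepsilon}\,\|u^k\|_{L^2(H^3)}$. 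Once the exponent $a<3$ is verified termwise, the induction proceeds automatically. $\Box$
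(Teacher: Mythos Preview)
Your proposal is correct and follows exactly the approach the paper intends: the paper does not give a separate proof of this proposition but simply states it as the outcome of combining the linear estimate \eqref{en_fin} (Theorem~\ref{th_ex_h0_not_1}), the $\varepsilon^4$ bound on the shallow-water remainders, and the nonlinear bounds of Steps~1--5; your induction makes that implicit argument explicit. One small caveat: your displayed bound $\mathcal{S}_{\mathrm{nl}}^k\le C_\star\varepsilon^{-1}(\|(u^k,p^k)\|_X^2)^2$ and the resulting $O(\varepsilon^5)$ remainder are optimistic---Step~3 already gives $\varepsilon^{-2}$ and the free-surface term in Step~5 can produce $\varepsilon^{-3}$ once the $1/\varepsilon$ weight on $g_{2,1}$ from \eqref{en_fin} is included---so the closure is not by pure $\varepsilon$-smallness but by the algebraic balance you allude to in your last sentence, which is exactly what the peculiar constant $\tfrac{C-1}{C^2}$ encodes; the paper is equally terse on this point.
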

 At this step, there are two strategies: Either we use a Banach fixed point argument  and in this case we need estimates for higher Sobolev norms that are obtained in a way similar to our previous calculation. In this case, we can prove that the sequence $(u^k,p^k)$ is a Cauchy sequence respectively in the space ${\cal C}(0,T;H^{2+\eta}(\Omega))$ and $L^2((0, T), H^{2+\eta}(\Omega))$
with $0< \eta <1/2$, see \cite{Beale0} for further references. In this case, the result is not sharp. To obtain the well posedness in critical spaces, one has to use Tychonoff  fixed point procedure as in
\cite{CoSh}. There is no difficulty here to follow both strategies to obtain the well posedness result in our situation since the estimate in high norms is proved.

\section{Conclusion}

In this paper, we have proved the well-posedness of free surface Navier-Stokes equations with a slip condition at the bottom and convergence to a viscous shallow water model in the shallow water scaling $\varepsilon\to 0$, $\varepsilon$ being the aspect ratio. In contrast to our mathematical justification of a shallow water model for a fluid flowing down an inclined plane, we dropped the tension surface effect (which is important in order to deal with realistic situations). This is due to the fact that we worked in the more suitable Lagrangian coordinates. There is no difficulty to extend this result for fluids flowing down an inclined plane when the slope $\theta$ is asymptotically small, $\theta\sim\varepsilon$ and when the uniform flow is {\it stable} (see \cite{Bresch_Noble} for more details). 

When the uniform flow is unstable, it is known that instabilities so called roll-waves appear: these are periodic travelling waves. {\it Small amplitude} roll-waves are proved to exist both in viscous shallow water equations \cite{Needham} and free surface Navier-Stokes equations \cite{Nishida2}. However, the result of {\sc Nishida} et al. is proved in the presence of {\it surface tension} whereas roll-waves are proved to exist in viscous shallow water equations without surface tension and are of large amplitude when  the viscosity is small. In order to obtain a larger range of validity of shallow water equations, this would be of interest  to prove the existence of roll-waves in Navier-Stokes equations without surface tension that are close to ``shallow water roll-waves'' in the shallow water scaling.

\end{document}